\newcommand\R{\mathbb{R}}
\newcommand\Z{\mathbb{Z}}
\newcommand\eps{\varepsilon}
\newcommand\Div{\mathop{\mathrm{div}}}
\newtheorem{theorem}{Theorem}[section]
\newtheorem{lemma}[theorem]{Lemma}
\newtheorem{definition}[theorem]{Definition}
\newtheorem{remark}[theorem]{Remark}
\newtheorem{proposition}[theorem]{Proposition}
\numberwithin{equation}{section}
\newcommand{\C}[1]{\mathcal{#1}}
\newcommand{\B}[1]{\mathbb{#1}}
\newcommand{\BR}{\mathbb{R}}
\newcommand{\mres}{\mathbin{\vrule height 1.6ex depth 0pt width
0.13ex\vrule height 0.13ex depth 0pt width 1.3ex}}
\newcommand{\wsconv}{\overset{\ast}{\rightharpoonup}}
\newcommand{\ha}{\mathcal{H}}
\newcommand{\mes}{\C M (\Omega)}
\DeclareMathOperator*{\Divergenza}{div}
\DeclarePairedDelimiter{\braket}{ ( } { )}
\title{Homogenization of line tension energies}
\author{M. Fortuna$^1$,
  A. Garroni$^2$\\
{\small Dipartimento di Matematica ``Guido Castelnuovo'' \\
	Sapienza Università di Roma
P.le A. Moro 5 00185 Roma Italy}}
\date{ }
\begin{document}
	\pagestyle{plain}

\maketitle
\begin{center}
\begin{minipage}[c]{0.8\textwidth}
Abstract:	We prove an homogenization result, in terms of $\Gamma$-convergence, for energies concentrated on rectifiable lines in $\R^3$ without boundary. 
The main application of our result is in the context of dislocation lines in dimension $3$.  The result presented here shows that the line tension energy  of  unions of single line defects converge to the energy associated to macroscopic densities of dislocations carrying  plastic deformation. As a byproduct of our construction for the upper bound for the $\Gamma$-Limit, we obtain an alternative proof of the density of rectifiable $1$-currents without boundary in the space of divergence free fields.
\end{minipage}
\end{center}
\section*{Keywords}
Gamma convergence, dislocations, divergence free fields, homogenization.

\section{Introduction}
\footnotetext[1]{martino.fortuna@uniroma1.it}
\footnotetext[2]{garroni@mat.uniroma1.it}

In this paper we prove an homogenization result for energies of the form
\begin{equation}\label{definition form of the energy}
\int_\gamma \psi(b,t) d \ha^1,
\end{equation}
where $\mu=b\otimes t\ha^1 \mres \gamma$ is a divergence free matrix valued measure, $\gamma\subset \BR^3$ is a $1$-rectifiable set, and $t$ its tangent. The vector $b$ is a  multiplicity which belongs to a discrete lattice $\C B$ in $\R^N$, with $N\geq 2$, and will also be called the Burgers vector of $\mu$.  Here $\C M_{df}^1(\Omega;\C B\times \B S^2)$ denotes the set of such measures, where $\Omega\subset \BR^3$ is an open bounded and regular set.

We consider the following scaled version of the energy in \eqref{definition form of the energy}
\begin{equation}\label{eq-Esigma}
E_\sigma(\mu):=\begin{cases}
	\displaystyle{	\int_{\gamma \cap \Omega} \sigma \psi\left(\dfrac{b}{\sigma},t\right)d\ha^1} \quad& \hbox{if }\ \mu=b\otimes t \ha^1\mres\gamma\in \C M_{df}^1(\Omega;\sigma\C B\times \B S^2),\\+\infty & \hbox{otherwise}.
\end{cases}
\end{equation} 
Under some mild assumptions on the density $\psi$ we study the asymptotics of $E_\sigma$ in terms of $\Gamma$-convergence with respect to the weak$*$ topology of measures.
The main result of the paper is that the limiting energy takes the form
\begin{equation}
	E_0(\mu)=\begin{cases}\displaystyle
		\int_\Omega g\left(\dfrac{d\mu(x)}{d\|\mu\|}\right)d\|\mu\|\quad & \hbox{if } \mu\in [\mes]^{N\times 3},\ \Divergenza \mu =0,\\{+\infty} & \hbox{otherwise},
	\end{cases}
\end{equation}
where $g: \BR^{N\times 3}\longrightarrow [0,+\infty)$ is a convex $1$-homogeneous function  defined in terms of  the density $\psi$ (see Theorem \ref{th-main} for the exact statement). 

Our result extends the result in  \cite{ConGarMul17}, where  the same problem is treated in  dimension $2$, i.e., $\Omega\subset \BR^2$, and from which we borrow several ideas for the proof. The main difference is that in dimension $2$ the support of the measures in $\C M_{df}^1(\Omega;\C B\times \B S^1)$ has codimension $1$, so that the energy in \eqref{definition form of the energy} reduces to a functional defined in the space of $SBV$ functions with values in $\C B$ (and the measure $\mu$ is nothing but the rotated gradient of the phase field $u$). This allow the authors to use tools from the Calculus of Variations for functionals defined on partitions and therefore in $SBV$ (see \cite{ambrosio2000fbv}).

In dimension larger than $2$ there is no phase field describing the admissible configuration, so  we use techniques of geometric measure theory and the analysis of  functionals defined on rectifiable currents (rephrased in terms of the measures in $\C M_{df}^1(\Omega;\C B\times \B S^2)$). Energies of the form \eqref{definition form of the energy} have been studied in \cite{ConGarMas15} where the authors give necessary and sufficient conditions for the lower semicontinuity of such functionals.

The major difficulty in the proof of the $\Gamma$-convergence is the construction for the upper bound. Implementing a standard density argument we first need to reduce to the case of measures absolutely continuous with respect to Lebesgue and having piecewise constant density. This step requires to prove that such measures are dense in energy for the limiting functional $E_0$.  The second key ingredient of the proof is then an ad hoc construction which allows to approximate divergence free piecewise constant fields with measures concentrated on polyhedral closed curves and optimal energy.
As a byproduct of the construction for the recovery sequence we thus obtain a different proof of the approximation of divergence free vector fields by means of measures defined on closed curves (see Theorem \ref{theo approximation straight lines}).  Stated by J. Bourgain and H.
Brezis in \cite{BouBre} in the context of solenoidal charges in the sense of Smirnov (see \cite{Smir}), this density result is  proved in \cite{GoHeSp} with respect to the strict topology of measures. \\

The main motivation for our analysis is the study of line defects in a $3$-dimensional crystal, the so called dislocations. At a mesoscopic scale (larger than the microscopic lattice spacing) they are indeed identified with line objects carrying a vector multiplicity belonging to the lattice, so that they can be represented as measures belonging to $\C M_{df}^1(\Omega;\C B\times \B S^2)$.  The divergence free constraint is reminiscent of the topological nature of these defects.
At this level one can associate to each dislocation a line tension energy, i.e., an energy with the same form of \eqref{definition form of the energy}.
Such energies can in turn be derived from more fundamental models. For example, in \cite{ConGarOr}, \cite{GarMarSca}, and \cite{ConGarMar22} the authors deduce the line tension model accounting for the elastic distortion in the material induced by the presence of dislocations. See also \cite{Po07},  \cite{ScaZep}, \cite{DeLGarPon12}, and \cite{ScaZepMu}  for a similar derivation in dimension 2 for cylindrical geometry where dislocations are viewed as point defects.
Similarly, this type of line tension model can also be derived under the assumption that the line defects are contained in a given (slip) plane as limit of nonlocal phase transition energies (in the spirit of the Cahn Hilliard energies for liquid-liquid phase transtions), known in the literature of dislocations as (generalised) Nabarro-Peirls models (see \cite{CoGarMu11} and the references therein). The natural representation of the line energy here is given by functionals defined on the space of $BV$ functions with values in a discrete group, so that dislocations are identified by the jump set of such functions, see \cite{ConGarMul16}.

In this paper we are interested in the case of a large number of dislocations on a macroscopic scale.  The interest of this case lying in the fact that a large quantity of dislocations is responsible for plastic deformation in the material (see \cite{ConGarMul17} for a more complete analysis). In particular, starting from the rescaled version of the line tension energy defined in \eqref{eq-Esigma}, we are interested in recovering an effective energy for a  large system of dislocations on a scale at which they can be seen as diffused. In this respect our limiting energy $E_0$ can be understood as the macroscopic (self) energy associated to a continuous distribution of dislocations. 

The result presented in here is also a crucial step for a derivation of a macroscopic model for plasticity accounting  for the presence of defects in the same spirit of \cite{GaLePo08}. The derivations of such macroscopic models as limit of elastic energies for incompatible fields, under proper energy scalings, will appear in a forthcoming paper.

The structure of the paper is as follows: in Section \ref{sec-preliminaries} we give preliminary definitions and recall some known results. In Section \ref{The Gamma-convergence result} we state the main result and present the proof of the lower bound.
In Section \ref{sec-approx} we present the approximation results needed for the upper bound, the latter being proved in Section \ref{sec-upper}.

\section{Preliminaries and statement of the main result}\label{sec-preliminaries}

We first set all the notation needed for the statement and proof of our main result. Unless further specified, in what follows $\Omega$ is a bounded open set of $\R^n$ with Lipschitz boundary.
In what follows, without loss of generality we will assume $\C B=\Z^N$, for  $N\geq 1$.

\subsection{Configurations}
We start with the set of admissible configurations.
In what follows we will denote with $[\C M(\Omega)]^{N\times n}$ the space of bounded Radon measures with values in $\R^{N\times n}$. 
The space $\C M^1(\Omega; \C S\times \B S^{n-1})$ will denote the set of 
 measures in  $[\C M(\Omega)]^{N\times n}$ of the form
\begin{equation}\label{form of dislocation}
    \mu=\theta\otimes \tau \ha^1\mres \gamma,
\end{equation}
where $\gamma\subset \BR^n $ is a  $1$-rectifiable set with tangent vector $\tau\in \mathbb{S}^{n-1}$  defined $\ha^1$ a.e.  on $\gamma$, $\C S \subset \BR^N$ is a generic subset, and $\theta\in [L^1(\gamma, \ha\mres\gamma)]^N $ is such that $\theta(x) \in  \C S$ for $\ha^1$-a.e. $x\in \gamma$.  We remark that in most cases we will consider 
the set $\C S$ to be a discrete lattice that spans $\B R^N$ but for notational purposes it is convenient to give the above definition for general sets. We also observe that although the main result, Theorem \ref{th-main}, is stated for the case $n=3$, in Section $3$ we prove an approximation result which holds in any dimension, hence it is convenient to give the relevant definitions for an arbitrary dimension $n\geq 2$.

We say that a measure in $[\C M(\Omega)]^{N\times n}$ is divergence free if it is row-wise divergence free, i.e., if the following holds true 
$$
\int_\Omega \sum_{j=1}^n\partial_j \varphi d\mu_{ij}=0\qquad \forall \varphi\in C_0^\infty(\Omega), \qquad \forall i=1,\cdots,N.
$$

The subset of  $\C M^1(\Omega; \C S\times \B S^{n-1})$ (and   $[\C M(\Omega)]^{N\times n}$) of divergence free measures will be denoted by $\C M_{df}^1(\Omega;\C S\times \B S^{n-1})$ (respectively   $[\C M(\Omega)]_{df}^{N\times n}$).  Note that, if  $G\in [L^1(\Omega)]^{N\times n}$ and $\mu= G \C L^n\in [\C M(\Omega)]^{N\times n}$ with $\C L^n$  the $n$-dimensional Lebesgue measure, then $\Div  G=0$ in the sense of distributions  if and only if $\mu\in \C [\C M(\Omega)]_{df}^{N\times n}$. 

We say that a measure  $\mu\in \C M^1(\Omega;\C S\times\B S^{n-1})$  is polyhedral if its support if formed by a finite number of straight closed segments.

\begin{remark}\label{obs meaning of divergence free constraint}
	Given a measure $\mu=b\otimes t\ha^1\mres\gamma\in \C M^1(\Omega;\Z^N\times \B S^{n-1})$, where $b\in \Z^N$ is constant and   $\gamma\in [\operatorname{Lip}([0,1])]^{n}$ is a curve such that $\gamma^\prime=t$, it holds that
	\begin{equation}\label{behaviour divergence for a one current}
		\langle \mu, \nabla \varphi\rangle = \langle b,\varphi(\gamma(1))-\varphi(\gamma(0))\rangle,\ \ \forall \varphi \in [C_c^1(\BR^n)]^N,
	\end{equation}
	hence it must be
	\begin{equation}
		\Div \mu = b\delta_{\gamma(0)}-b\delta_{\gamma(1)},
	\end{equation}
	where $b\delta_a\in [\C M(\Omega)]^{N}$ is a Dirac delta centered at $a\in \Omega$ with multiplicity $b$. Accordingly we say that $\mu$ carries a mass of $b$ at $\gamma(0)$ and a mass of $-b$ at $\gamma(1)$, where the sign depends on the orientation $t$.
	In particular for such a measure to be  divergence free in $\Omega$, the curve $\gamma$ must not have endpoints contained in $\Omega$.
	
\end{remark}
\begin{remark}\label{rem-structure}
If $\Omega$ is a simply connected domain, measures in $\C M_{df}^1(\Omega;\Z^N\times \B S^{n-1})$ can be extended to measures on the whole of $\R^n$ that can be characterized as measures concentrated on unions of countably many closed Lipschitz loops  with constant multiplicity in $\Z^N$ (see \cite{ConGarMas15}, Theorem 2.5, for the precise statement given in terms of $1$-rectifiable currents).
\end{remark}

The set  $\C M_{df}^1(\Omega;\Z^N\times \B S^{2})$ represents the set of admissible configurations for the class of energies under consideration.

\subsection{Energy densities and their main properties}
Here we recall the main properties of the class of energy densities 
$$
\psi:\Z^N\times \B S^2 \longrightarrow [0,+\infty).
$$
The $\ha^1$-elliptic envelope of $\psi$ is the function $\psi^{rel}$  obtained by solving, for any $b \in \mathbb{Z}^{N}$ and $t \in \mathbb{S}^{2}$, the cell problem
\begin{align}\label{definition H one elliptic envelope}
\psi^{\mathrm{rel}}(b, t):=\inf \Big\{&\int_{\gamma} \psi(\theta, \tau) d \mathcal{H}^{1}: \mu=\theta \otimes \tau \mathcal{H}^{1}\mres\gamma\in \C M_{df}^1(B_1;\Z^N\times \B S^{n-1}),\\
&
\operatorname{supp}\left(\mu-b \otimes t \mathcal{H}^{1}\mres\left(\mathbb{R} t \cap B_{1 / 2}\right)\right) \subset \subset B_{1 / 2}\Big\},\nonumber
\end{align}
where $B_{r}=B_{r}(0)$ denotes a ball of radius $r$ and center 0. We say that $\psi $ is $\ha^1$-elliptic if $\psi^{rel}=\psi$ (see \cite{ConGarMas15}).

 We will assume that $\psi$ is $\ha^1$-elliptic and we extend it to the whole of $\R^N\times\B S^2$  by setting $\psi(b, t)=+\infty$ for all $b \in \mathbb{R}^{N} \smallsetminus \Z^{N}$. Further, we assume that 
\begin{equation}
	\label{eq-lower-bound-psi}
	\psi(b, t) \geq c\,|b| \qquad \forall\ b\, \in \Z^{N} \smallsetminus \{0\} \hbox{ and } t \in \B S^{2}.
\end{equation}
Moreover we recall that $\ha^1$-ellipticity implies that $\psi$ is subattidive  and has linear growth at infinity in the first entry, i.e., for all $b,b'\in \R^N$ and $t\in \B S^2$ it satisfies
\begin{equation}
	\label{eq-helliptic-prop}
\psi(b+b',t)\leq	\psi(b,t)+\psi(b',t)\qquad \hbox{and }\qquad \psi(b,t)\leq \overline c \,|b|,
\end{equation}
for some positive constant $\overline c$,  see \cite[Lemma 3.2 (iii) and (iv)]{ConGarMas15}.

The recession function  $\psi_{\infty}: \mathbb{R}^{N} \times \B S^{2} \rightarrow$ $[0, \infty]$ of  $\psi$ is given by
\begin{equation}\label{definition of recession function}
    \psi_{\infty}(b, t):=\liminf _{s \rightarrow +\infty} \frac{1}{s} \psi(s b, t).
\end{equation}

This is a crucial ingredient in order to determine the effective energy density of the limiting energy $E_0$. For the readers' convenience we now recall some of the main properties of  $\psi_\infty$ as they are proved in \cite{ConGarMul17}.

\begin{proposition}\label{prop-recession-properties}
	Let $\psi:\R^N\times \B S^2\to \R\cup\{+\infty\}$ be $\ha^1$-elliptic, satifying \eqref{eq-lower-bound-psi} and such that $\psi(b,t)=+\infty$ if $b\in \R^N\setminus\Z^N$. Let $\psi_\infty$ be its recession function. Then the following hold:
	\begin{enumerate}
		\item $\psi_\infty(\cdot,t)$ is positively $1$-homogeneous for all $t\in \B S^2$;
		\item  Let $\mathcal{Q}=\left\{\lambda z: \lambda>0, z \in \mathbb{Z}^{N}\right\}$, then $\psi_{\infty}(b, t)=+\infty$ if $b \in \mathbb{R}^{N} \smallsetminus \mathcal{Q}$, and $\psi_{\infty}(b, t) \leq c\,|b|$ for $b \in \mathcal{Q}$;
		\item \label{lemma convergence psi to psi infty}
		Let $b \in \mathbb{R}^{N}, t \in \B S^{2}$, for any sequence $z_{j} \in \mathbb{Z}^{N}$ such that $\left|z_{j}\right| \rightarrow +\infty$ and $z_{j} /\left|z_{j}\right| \rightarrow b /|b|$ one has
		$$
		\lim _{j \rightarrow +\infty} \frac{1}{\left|z_{j}\right|} \psi\left(z_{j}, t\right)=\frac{1}{|b|} \psi_{\infty}(b, t).
		$$
	\end{enumerate}

\end{proposition}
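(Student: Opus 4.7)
The plan is to handle (i), (ii), (iii) in order, with the first two immediate from the definition of $\psi_\infty$ combined with the subadditivity and linear growth recorded in \eqref{eq-helliptic-prop}, and (iii) the main step, based on Fekete's subadditive lemma and a comparison argument.

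For (i), a change of variable $r = \lambda s$ in the defining $\liminf$ yields, for $\lambda > 0$, the identity $\psi_\infty(\lambda b, t) = \liminf_{s \to \infty}\tfrac{1}{s}\psi(s \lambda b, t) = \lambda \liminf_{r \to \infty}\tfrac{1}{r}\psi(rb, t) = \lambda \psi_\infty(b, t)$. For (ii), I would distinguish two cases: if $b \notin \mathcal{Q}$, no positive scalar $s$ puts $sb$ in $\Z^N$, so $\psi(sb, t) \equiv +\infty$ and hence $\psi_\infty(b, t) = +\infty$; if instead $b = \mu z_0$ with $\mu > 0$ and $z_0 \in \Z^N$ primitive, evaluating the $\liminf$ along $s_k = k/\mu$ gives $s_k b = k z_0 \in \Z^N$, and combined with the growth estimate $\psi(k z_0, t) \leq \overline c\, k\, |z_0|$ from \eqref{eq-helliptic-prop} one obtains $\psi_\infty(b, t) \leq \overline c\, |b|$.

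For (iii), assuming the meaningful case $b = \mu z_0 \in \mathcal{Q}$ with $z_0$ primitive, I would proceed in two steps. Since $a_k := \psi(k z_0, t)$ is subadditive in $k \in \N$ by \eqref{eq-helliptic-prop}, Fekete's lemma gives that $\lim_k a_k/k = \inf_k a_k/k$ exists; because $\psi(sb, t) = +\infty$ off the arithmetic progression $\{k/\mu : k \in \N\}$, this limit equals $\tfrac{1}{\mu}\psi_\infty(b, t) = \tfrac{|z_0|}{|b|}\psi_\infty(b, t)$. Next, for a general sequence $z_j \in \Z^N$ with $|z_j| \to \infty$ and $z_j/|z_j| \to b/|b| = z_0/|z_0|$, I would pick integers $k_j$ nearest to $|z_j|/|z_0|$; the remainder $w_j := z_j - k_j z_0 \in \Z^N$ then satisfies $|w_j| \leq |z_j|\,|z_j/|z_j| - z_0/|z_0|| + O(|z_0|) = o(|z_j|)$, while $k_j |z_0|/|z_j| \to 1$. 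Applying subadditivity in both directions to $z_j = k_j z_0 + w_j$ and $k_j z_0 = z_j + (-w_j)$, together with the bound $\psi(\pm w, t) \leq \overline c\, |w|$ on integer vectors, gives $|\psi(z_j, t) - \psi(k_j z_0, t)| \leq \overline c\, |w_j| = o(|z_j|)$. Dividing by $|z_j|$ and using the first-step identity then produces $\lim_j \tfrac{1}{|z_j|}\psi(z_j, t) = \tfrac{1}{|b|}\psi_\infty(b, t)$.

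The main obstacle is the careful bookkeeping in (iii): one must reconcile the continuous-in-$s$ $\liminf$ defining $\psi_\infty$ with the integer-indexed application of Fekete's lemma, and then exploit the linear growth of $\psi$ on $\Z^N$ through subadditivity to transfer the conclusion from the very special multiples $k_j z_0$ to a general sequence $z_j$ sharing only the asymptotic direction $b/|b|$.
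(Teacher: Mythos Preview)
The paper does not give a self-contained proof of this proposition: it simply declares (i) immediate and cites \cite{ConGarMul17}, Lemmas~3.3 and~3.4, for (ii) and (iii). Your proposal supplies the missing details and is correct. Your treatment of (i) by rescaling the $\liminf$ and of (ii) by the dichotomy $b\in\mathcal Q$ versus $b\notin\mathcal Q$ is exactly what one expects. For (iii), invoking Fekete's lemma along the integer ray $k\mapsto\psi(kz_0,t)$ to upgrade the $\liminf$ to a genuine limit, and then controlling a general $z_j$ via the two-sided subadditive estimate $|\psi(z_j,t)-\psi(k_jz_0,t)|\le\overline c\,|w_j|=o(|z_j|)$, is the natural mechanism and matches the argument underlying the cited lemmas in \cite{ConGarMul17}.

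One small remark: as stated, (iii) is only consistent for $b\in\mathcal Q$, since for $b\notin\mathcal Q$ the right-hand side equals $+\infty$ by (ii) while the left-hand side is bounded by $\overline c$ thanks to \eqref{eq-helliptic-prop}. Your explicit restriction to ``the meaningful case $b\in\mathcal Q$'' is therefore appropriate and worth noting.
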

The proof of property (i) is immediate, while properties (ii) and (iii) can be found in \cite{ConGarMul17}, Lemma 3.3 and 3.4 respectively.

Finally we define the function $g: \BR^{N\times 3}\longrightarrow [0,+\infty)$, which provides the effective energy density,  as the convex envelope of
\begin{equation}\label{eq-g-infty}
	g_\infty(A)=\begin{cases}\psi_\infty(b,t) \quad &\hbox{if } A=b\otimes t,\ \ b\in \BR^N,\ \  t\in \mathbb{S}^2,\\
		{+\infty} &\hbox{otherwise},
		\end{cases}
\end{equation}
i.e., $g(A)=g^{**}_\infty(A)$. Some important properties of $g$ are described in the following

\begin{lemma}\label{lemma properties g}
The function $g$ is continuous, $1$-homogeneous, and there are $c_1,c_2>0$ such that
$$
c_1 |A|\leq g(A) \leq c_2|A|,
$$
 for all matrices $A\in \R^{N\times 3}$.
\end{lemma}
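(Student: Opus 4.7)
The plan is to derive each property of $g$ from the corresponding property of $g_\infty$, exploiting that the convex envelope of a positively $1$-homogeneous function is again positively $1$-homogeneous and admits a useful sublinear representation. First, $g_\infty$ is itself positively $1$-homogeneous: when $A=b\otimes t$ is rank one, $\lambda A=(\lambda b)\otimes t$ and part (i) of Proposition \ref{prop-recession-properties} gives $g_\infty(\lambda A)=\lambda g_\infty(A)$, while if $A$ has rank $\geq 2$ both sides are $+\infty$. Hence $g=g_\infty^{**}$ is $1$-homogeneous as well, and standard convex analysis gives the sublinear representation
\[
g(A)=\inf\Big\{\sum_{i=1}^{k} g_\infty(A_i)\ :\ k\in\N,\ A=\sum_{i=1}^{k} A_i\Big\}.
\]

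For the upper bound $g(A)\leq c_2|A|$ I would expand $A$ on the canonical basis of rank-one matrices,
\[
A=\sum_{j=1}^3\sum_{k=1}^N A_{kj}\,(e_k\otimes e_j)=\sum_{j=1}^3\sum_{k=1}^N |A_{kj}|\,\bigl(\operatorname{sgn}(A_{kj})e_k\bigr)\otimes e_j.
\]
Each summand $|A_{kj}|(\pm e_k)\otimes e_j$ is rank one with Burgers vector $\pm e_k\in\Z^N\subset\C Q$, so by Proposition \ref{prop-recession-properties}(ii) combined with the $1$-homogeneity of $\psi_\infty$ (part (i)),
\[
g_\infty\bigl(|A_{kj}|(\pm e_k)\otimes e_j\bigr)=|A_{kj}|\,\psi_\infty(\pm e_k, e_j)\leq c\,|A_{kj}|.
\]
Plugging this decomposition into the sublinear formula and using the trivial estimate $\sum_{j,k}|A_{kj}|\leq \sqrt{3N}\,|A|$ yields $g(A)\leq c_2|A|$.

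For the lower bound $g(A)\geq c_1|A|$ I would first verify that $g_\infty(A)\geq c_1|A|$ pointwise on $\R^{N\times 3}$. On non-rank-one matrices this is automatic since $g_\infty=+\infty$. On $A=b\otimes t$ with $b\in\C Q$, writing $b=\lambda z$ with $z\in\Z^N\setminus\{0\}$ and $\lambda>0$, the definition of $\psi_\infty$ combined with \eqref{eq-lower-bound-psi} gives along $s=m/\lambda$, $m\in\N$,
\[
\tfrac{1}{s}\psi(sb,t)=\tfrac{\lambda}{m}\,\psi(mz,t)\geq c\lambda|z|=c|b|,
\]
so $\psi_\infty(b,t)\geq c|b|$. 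For $b\notin\C Q$, $b\neq 0$, one has $sb\notin\Z^N$ for every $s>0$, so $\psi(sb,t)=+\infty$ and $\psi_\infty(b,t)=+\infty$. Hence $g_\infty\geq c_1|\cdot|$ on the whole of $\R^{N\times 3}$, and since $A\mapsto c_1|A|$ is itself convex, it lies below the convex envelope $g_\infty^{**}=g$.

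Continuity is then automatic: by the upper bound $g$ is finite on all of $\R^{N\times 3}$, and a finite-valued convex function on a finite-dimensional vector space is continuous. The only step that really needs any thought is the decomposition underlying the upper bound, because $g_\infty$ is $+\infty$ outside the union of rank-one rays with Burgers vector in a rational direction; the $\pm e_k\otimes e_j$ basis resolves this with only finitely many admissible directions and a control on the sum of coefficients.
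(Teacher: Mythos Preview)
Your argument is correct and largely parallel to the paper's. The upper bound via the canonical rank-one decomposition $A=\sum_{k,j}A_{kj}\,e_k\otimes e_j$ is exactly the paper's idea, only phrased through the sublinear formula $g(A)=\inf\{\sum_i g_\infty(A_i):\sum_i A_i=A\}$ rather than as an explicit convex combination; and continuity is handled identically. The two genuine differences are in the order and method for $1$-homogeneity and the lower bound. You invoke the standard fact that the convex envelope of a positively $1$-homogeneous function is again $1$-homogeneous (which also justifies your sublinear representation), whereas the paper proves $1$-homogeneity of $g$ by hand via Carath\'eodory's theorem. For the lower bound, you argue pointwise that $g_\infty\geq c\,|\cdot|$ (using \eqref{eq-lower-bound-psi} in the definition of $\psi_\infty$) and then pass to the convex envelope, while the paper deduces it in one line from continuity and $1$-homogeneity (i.e.\ $g$ attains a positive minimum on the unit sphere). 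Your route is slightly more self-contained, since it makes explicit why that minimum is strictly positive; the paper's route is shorter once $1$-homogeneity is in hand. One small remark: when you bound $\psi_\infty(b,t)$ from below for $b\in\C Q$, note that the $\liminf$ in \eqref{definition of recession function} is over all $s\to\infty$, not just the subsequence $s=m/\lambda$; the conclusion still holds because for the remaining $s$ one has $sb\notin\Z^N$ and hence $\psi(sb,t)=+\infty$, but it is worth saying so (alternatively, Proposition~\ref{prop-recession-properties}\ref{lemma convergence psi to psi infty} gives $\psi_\infty(b,t)\geq c|b|$ directly).
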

\begin{proof}
Every matrix $A\in\R^{N\times 3}$ can be decomposed as a convex combination of rank 1 matrices on  which $g_\infty$ is finite, namely 
$$
A=\sum_{i=1}^N\sum_{j=1}^3 \dfrac{
|A_{ij}|}{3\|A e_j\|_1}\dfrac{3A_{ij}\|A e_j\|_1}{|A_{ij}|} e_j\otimes  e_i =\sum_{i=1}^N\sum_{j=1}^3 \lambda_j^i q_j^i\otimes  e_i,$$
where $\lambda_j^i:= 3^{-1}|A_{ij}|\|A e_j\|_1^{-1} $, $\sum_j\sum_{i}\lambda_j^i=1$, and  $q^i_j:=3A_{ij}\|A e_j\|_1|A_{ij}|^{-1} e_j\in \mathcal{Q}$. Then by convexity we have
$$
g(A)\leq \sum_{i,j}\dfrac{1}{3}\dfrac{|A_{ij}|}{\|A e_j\|_1} \psi_\infty\braket*{ q^i_j\otimes e_i}\leq \sum_{i,j}\dfrac{1}{3}\dfrac{|A_{ij}|}{\|A e_j\|_1} c\,|q^i_j|\leq c |A|.
$$ 
Being convex and finite, $g$ is continuous.
Now from the $1$-homogeneity of $\psi_\infty$ we infer that also $g_\infty$ is positively $1$-homogeneous. By Caratheodory's Theorem for every $\xi\in\BR^{N\times 3}$ and $ \eta >0$, there exist $3N+1$ vectors $\xi_i\in\BR^{N\times 3}$, and numbers $ t_i\geq0 $, such that $\sum_{i=1}^{3N+1}t_i=1$ and $ \xi=\sum_{i=1}^{3N+1} t_i \xi_i$ and $g(\xi)+\eta\geq \sum_{i=1}^{3N+1} t_i g_\infty(\xi_i)$; hence we have that 
\begin{equation*}
    \lambda g(\xi) +\lambda\eta\geq\lambda \sum_{i=1}^{3N+1} t_i g_\infty(\xi_i) =\sum_{i=1}^{3N+1} t_i g_\infty(\lambda \xi_i)\geq g(\lambda\xi),
\end{equation*}
and then $\lambda g(\xi)\geq g(\lambda\xi)$. The opposite inequality is finally obtained similarly replacing $\lambda$ and $\xi$ by  $\lambda^{-1}$ and $\lambda \xi$. The bound from below is a consequence of continuity and $1$-homogeneity.
\end{proof}

\subsection{The $\Gamma$-convergence result}\label{The Gamma-convergence result}

We now have all the ingredients in order to state the main result of the paper.

\begin{theorem}
	\label{th-main}
	Let $\psi:\Z^N\times \B S^2 \longrightarrow [0,+\infty)$ be $\ha^1$-elliptic and obey $\frac{1}{c}|b|\leq \psi(b,t)$ for all $b\in \Z^N$ and $t\in \B S^2$. Let $\Omega \subset \B R^3$ be an open bounded set, uniformly Lipschitz and simply connected. Then the functionals
	\begin{equation}
		E_\sigma(\mu):=\begin{cases}\displaystyle{\int_\gamma{\sigma \psi\left (\frac{\theta}{\sigma},\tau\right)d\ha^1}}& \textit{if }\mu=\theta\otimes \tau   \ha^1\mres\gamma \in \C M_{df}^1(\Omega; \sigma\Z^N\times\B S^2),\\
			+\infty & otherwise,
			\end{cases}
	\end{equation} 
	$\Gamma$-converge, as $\sigma \to 0$, with respect to the weak$*$ topology of $[\mes]^{N\times 3}$, to
	\begin{equation}
		E_0(\mu)=\begin{cases}\displaystyle{\int_\Omega{g\left(\frac{d\mu}{d\|\mu\|}\right)d\|\mu\|} }& \textit{if } \mu\in [\C M(\Omega)]_{df}^{N\times 3},\\
		+\infty& otherwise,
		\end{cases}
	\end{equation}
	where $g: \BR^{N\times 3}\longrightarrow [0,+\infty)$ is the convex envelope of $g_\infty$ as defined in \eqref{eq-g-infty}.
\end{theorem}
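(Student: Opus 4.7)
The plan is to establish the $\Gamma$-liminf and $\Gamma$-limsup inequalities separately; compactness of energy-bounded sequences in the weak$*$ topology of $[\C M(\Omega)]^{N\times 3}$ follows at once from \eqref{eq-lower-bound-psi}, which gives $\|\mu_\sigma\|(\Omega)\le c^{-1} E_\sigma(\mu_\sigma)$, and the divergence-free constraint is preserved in the limit.

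For the $\Gamma$-liminf, the strategy is to dominate $E_\sigma(\mu_\sigma)$ from below by $E_0$ evaluated on the same $\mu_\sigma$, up to an infinitesimal error, and then invoke lower semicontinuity of $E_0$. Since $g$ is continuous, convex and $1$-homogeneous by Lemma \ref{lemma properties g}, a Reshetnyak-type argument gives the weak$*$ lower semicontinuity of $\mu\mapsto \int_\Omega g(d\mu/d\|\mu\|)d\|\mu\|$ on $[\C M(\Omega)]^{N\times 3}$. To produce the pointwise comparison, I fix $\varepsilon>0$ and exploit Proposition \ref{prop-recession-properties}(iii) together with $g\le g_\infty$ on rank-one matrices to secure $\sigma\psi(b/\sigma,t)\ge (1-\varepsilon)\, g(b\otimes t)$ whenever $|b|\ge \delta(\varepsilon)$; the small-mass region is handled directly via the lower bound \eqref{eq-lower-bound-psi} and Lemma \ref{lemma properties g}, and the resulting error vanishes by letting first $\sigma\to 0$ and then $\delta,\varepsilon\to 0$.

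The $\Gamma$-limsup is the substantive part of the proof and I would structure it as three successive reductions. First, a density-in-energy statement for $E_0$ reduces the construction to measures of the form $\mu=G\,\C L^3$ with $G$ divergence-free and piecewise constant on a polyhedral partition of $\Omega$; this combines mollification with a projection onto piecewise constant fields that preserves the divergence-free constraint. Second, on each polyhedral cell where $G\equiv A$ the identity $g=g_\infty^{**}$ together with Carathéodory's theorem produces a decomposition $A=\sum_i \lambda_i\, q_i\otimes e_i$ with $q_i\in\C Q$, $e_i\in\B S^2$ and $\sum_i \lambda_i\psi_\infty(q_i,e_i)\le g(A)+\varepsilon$, in the spirit of the calculation already carried out in the proof of Lemma \ref{lemma properties g}. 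Third, to each rank-one summand I associate a microstructure of parallel straight segments of direction $e_i$, arranged periodically in the hyperplane orthogonal to $e_i$ with integer multiplicity close to $q_i/\sigma$; the spacing and multiplicity are tuned so that the associated measure converges weakly$*$ to $\lambda_i\,q_i\otimes e_i\,\C L^3$ and the energy converges, via Proposition \ref{prop-recession-properties}(iii), to $\lambda_i\psi_\infty(q_i,e_i)$.

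The main obstacle is that such microstructures of open segments are never divergence-free, since each segment carries endpoint Dirac charges as recorded in Remark \ref{obs meaning of divergence free constraint}. I would remedy this by invoking Theorem \ref{theo approximation straight lines} of Section \ref{sec-approx}, which reconnects the free endpoints through short polyhedral arcs whose total $\ha^1$-mass is negligible compared with the leading order of the energy, producing a genuinely divergence-free approximation by polyhedral closed loops. The fact that such a cheap correction exists is precisely the manifestation, at the approximation level, of the divergence-free constraint on the target field $G$: the endpoint charges produced on each internal interface of the partition cancel exactly with those coming from the neighbouring cell. A final diagonal argument on $\varepsilon$, on the mesh of the partition and on $\sigma$ then delivers a recovery sequence $\mu_\sigma\wsconv\mu$ with $\limsup E_\sigma(\mu_\sigma)\le E_0(\mu)$.
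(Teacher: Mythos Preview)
Your overall architecture matches the paper's, but two points deserve sharpening.

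For the liminf, the paper avoids your $\varepsilon$--$\delta$ argument entirely: subadditivity of $\psi$ in the first variable (a consequence of $\ha^1$-ellipticity, see \eqref{eq-helliptic-prop}) gives $\psi(z,t)\ge \psi(kz,t)/k$ for every $z\in\Z^N$ and $k\ge1$, whence $\psi(z,t)\ge\psi_\infty(z,t)\ge g(z\otimes t)$ \emph{exactly}. This yields $E_\sigma(\mu_\sigma)\ge E_0(\mu_\sigma)$ pointwise, and Reshetnyak lower semicontinuity finishes. Your route through Proposition \ref{prop-recession-properties}\ref{lemma convergence psi to psi infty} would require a uniformity over $b/|b|\in\B S^{N-1}$ that is not stated there, so the simpler argument is also the safer one.

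For the limsup, the invocation of Theorem \ref{theo approximation straight lines} is not quite right: that theorem takes a divergence-free measure as input and returns polyhedral divergence-free approximants, but says nothing about their structure, so you cannot compute $E_\sigma$ on them. What you actually need is Lemma \ref{lemma approximation from piecewise constant to straight segment }, which records that the approximant splits as the specified segments in directions $t^i_j$ with multiplicities $b^i_j$ plus a remainder $\eta_k$ with $\|\eta_k\|\to0$; only this explicit decomposition lets you estimate the energy. More substantively, your plan to put integer multiplicities $\approx q_i/\sigma$ on the segments and \emph{then} close up runs into the difficulty that the closing arcs (the $\rho_k$ of Lemma \ref{lemma recovery on a simplex 2 }) carry multiplicities of the form $\ha^2(\Delta)An_h - B(T,k,h)$, which are generically not in $\sigma\Z^N$, so the resulting measure does not lie in the domain of $E_\sigma$. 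The paper sidesteps this by separating the two operations: first build polyhedral closed loops with multiplicities in $\C Q$ and control the auxiliary functional $F_\infty$ with density $\psi_\infty$; then, in a second step, round each loop's constant multiplicity to $\sigma\lfloor\theta_i/\sigma\rfloor$, which preserves closedness (hence the divergence-free constraint) and lets Proposition \ref{prop-recession-properties}\ref{lemma convergence psi to psi infty} act on finitely many values. Your sketch has all the right ingredients but conflates these two steps in a way that would not close as written.
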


\begin{remark}
	Notice that from the lower bound on the density $\psi$ one immediately deduces that a sequence with equi-bounded energy has also bounded total variation. Therefore the compactness part of the $\Gamma$-convergence result is immediate.
\end{remark}

The proof of Theorem \ref{th-main} will be a consequence  of Proposition \ref{th-lower-bound} and Proposition \ref{pro-upper-bound} (respectively the lower and the upper bound).

As for the lower bound it is quite straightforward and it is a consequence of the definition of the energy density $g$. We give its proof with the proposition below.

\begin{proposition}
	\label{th-lower-bound}
		Let $\psi:\B Z^N\times \B S^2 \longrightarrow [0,+\infty)$ be $\ha^1$-elliptic and obey $\frac{1}{c}|b|\leq \psi(b,t)$ for all $b\in \B Z^N$ and $t\in \B S^2$, let $\Omega \subset \B R^3$ be  open and bounded. Then for every sequence $\sigma_j\to 0$ and $\mu_j\in \C M_{df}^1(\Omega;\sigma_j\Z^N\times\B S^2)$ converging weakly$*$ to some divergence free measure $\mu\in [\C M(\Omega)]^{N\times 3}$ we have
		$$
		\liminf_{j\to+\infty} E_{\sigma_j}(\mu_j)\geq E_0(\mu).
		$$
\end{proposition}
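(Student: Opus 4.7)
The plan is to reduce to Reshetnyak lower semicontinuity for convex positively $1$-homogeneous integrands against measures, after establishing the scale-invariant pointwise comparison $\sigma\,\psi(\cdot/\sigma,t)\geq g(\cdot\otimes t)$ between the mesoscopic energy density and the limiting density $g$.

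\textbf{Pointwise bound.} First I would prove the scale-invariant inequality
$$
\sigma\,\psi(z,t)\ \geq\ g\bigl(\sigma z\otimes t\bigr)\qquad\forall\,z\in\Z^N,\ t\in\B S^2,\ \sigma>0.
$$
The subadditivity of $\psi(\cdot,t)$ in \eqref{eq-helliptic-prop} gives $\psi(nz,t)\leq n\,\psi(z,t)$ for every $n\in\N$; dividing by $n$ and passing to the liminf as $n\to+\infty$ in the definition \eqref{definition of recession function} yields $\psi_\infty(z,t)\leq\psi(z,t)$ for every $z\in\Z^N$. Combining this with the positive $1$-homogeneity of $\psi_\infty(\cdot,t)$ from Proposition~\ref{prop-recession-properties}(i) and with $g=g_\infty^{**}\leq g_\infty$,
$$
\sigma\,\psi(z,t)\ \geq\ \sigma\,\psi_\infty(z,t)\ =\ \psi_\infty(\sigma z,t)\ =\ g_\infty(\sigma z\otimes t)\ \geq\ g(\sigma z\otimes t).
$$

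\textbf{Rewriting and lower semicontinuity.} Next I would apply the pointwise bound with $z=\theta_j/\sigma_j\in\Z^N$ along $\mu_j=\theta_j\otimes\tau_j\,\ha^1\mres\gamma_j$, and use the $1$-homogeneity of $g$ (Lemma~\ref{lemma properties g}) together with $\|\mu_j\|=|\theta_j|\,\ha^1\mres\gamma_j$ and $d\mu_j/d\|\mu_j\|=(\theta_j\otimes\tau_j)/|\theta_j|$ to obtain
$$
E_{\sigma_j}(\mu_j)\ \geq\ \int_{\gamma_j}g(\theta_j\otimes\tau_j)\,d\ha^1\ =\ \int_\Omega g\!\left(\frac{d\mu_j}{d\|\mu_j\|}\right)d\|\mu_j\|.
$$
Since $g$ is continuous, convex and positively $1$-homogeneous with linear growth (Lemma~\ref{lemma properties g}), the Reshetnyak lower semicontinuity theorem applies to the right-hand side on $[\C M(\Omega)]^{N\times3}$; together with the hypothesis $\mu_j\wsconv\mu$ and $\Div\mu=0$ this produces $\liminf_j E_{\sigma_j}(\mu_j)\geq E_0(\mu)$.

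\textbf{Main obstacle.} There is no genuine obstacle: the whole argument hinges on the pointwise comparison above, whose only subtle ingredient is the observation that $\ha^1$-ellipticity, through its subadditivity consequence, forces $\psi_\infty\leq\psi$ on the lattice, after which the $1$-homogeneity of $\psi_\infty$ delivers exactly the matching scaling with $g$ needed to feed into Reshetnyak.
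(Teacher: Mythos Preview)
Your proof is correct and follows essentially the same approach as the paper: both establish the pointwise inequality $\sigma\,\psi(z,t)\geq g(\sigma z\otimes t)$ via subadditivity (yielding $\psi_\infty\leq\psi$ on the lattice) and the $1$-homogeneity of $\psi_\infty$, then conclude by Reshetnyak lower semicontinuity using Lemma~\ref{lemma properties g}. The only cosmetic difference is that the paper packages the comparison as $\psi(sb,t)/s\geq\psi_\infty(b,t)$ and writes the resulting integral directly as $E_0(\mu_j)$, while you spell out the intermediate identity $\int_{\gamma_j}g(\theta_j\otimes\tau_j)\,d\ha^1=\int_\Omega g(d\mu_j/d\|\mu_j\|)\,d\|\mu_j\|$.
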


\begin{proof}
	
	By the subadditivity of $\psi$ we have that for all $ t\in \B S^2, b\in \B R^N, s >0, k\geq 1$ it holds
	\begin{equation*}
		\dfrac{\psi(s b,t)}{s }= \dfrac{k\psi(s b,t)}{sk}\geq\dfrac{\psi(ks b,t)}{s k},
	\end{equation*}
	hence, by the definitions of $\psi_\infty$ and $g$, we have
	\begin{equation}\label{eq-previous-ineq}
		\dfrac{\psi(s b,t)}{s }\geq\liminf_{k\rightarrow +\infty} \dfrac{\psi(ks b,t)}{s k}\geq \psi_\infty(b,t)\geq g(b\otimes t).
	\end{equation}
	Let $\mu_{j}\in\C M_{df}^1(\Omega;\sigma_j\Z^N\times\B S^2) $ be converging to $\mu \in [\C M(\Omega)]_{df}^{N\times 3}$ and be such that $\liminf_j E_{\sigma_j}(\mu_j)< +\infty $.
	Then from \eqref{eq-previous-ineq} we obtain
	\begin{equation*}
		E_{\sigma_j}(\mu_j)=\int_{\gamma_j} \sigma_j\psi(b_{j}\sigma_j^{-1},t_j)d\ha^1
		\geq \int_{\gamma_j} g(b_j\otimes t_j)d\ha^1=E_0(\mu_j).
	\end{equation*}
	To conclude we use the fact that $E_0$ is weakly lower semicontinuous by Reshetnyak's Theorem and Lemma $\ref{lemma properties g}$, so that
	\begin{equation}
		\liminf_{j\rightarrow +\infty}E_{\sigma_j}(\mu_j) \geq E_0(\mu).
	\end{equation}
	
\end{proof}

 The upper bound instead represents the core of the paper. It requires a technical construction and will be presented in the next section, where we will first show an approximation result for divergence free measures and then make an explicit construction with optimal energy.

\section{Approximation of divergence free measures}\label{sec-approx}

In this section we show two approximation results which are crucial for the  $\Gamma$-limsup inequality. We will show that any divergence free measure $\mu \in [\C M(\Omega)]^{N\times n}$ can be approximated, strictly  and therefore in energy, with measures that are absolutely continuous with respect to the Lebesgue measure,  piecewise constant and divergence free. Further we will show that, for $n=3$ the latters can be approximated with measures in $\C M_{df}^1(\Omega;\C Q\times \B S^{2})$. 
The case of dimension $n> 3$ presents some difficulties due to specific construction contained in Lemma \ref{lemma approximation from piecewise constant to straight segment }, we nevertheless expect that the approach followed in this paper can be adapted to the general case.

\subsection{Piecewise constant approximation}
Here we consider the general case of functions and measures in $\Omega\subseteq \R^n$, for $n\geq 2$.
First we introduce a class of admissible sequences of triangulations $\C T:=\{T^i\}_{i=1,\ldots, M}$ of  $\Omega\subseteq\R^n$.

\begin{definition}\label{def-triangulation}
We say that a family $\C T:=\{T^i\}_{i=1,\ldots, M}$ of simplexes  is an \emph{admissible sequence of triangulations (of aspect ratio $C_0$)} if the closed tetrahedra  $T^i$, with $1\leq i\leq M$, satisfy
\begin{enumerate}
	\item  $\Omega\subset\subset\bigcup_{i=1}^{M} T^i$;
	\item $\operatorname{int}(T^i)\cap \operatorname{int}(T^j)=\emptyset$ for $i\not =j$ (here $\operatorname{int}(T)$ is the topological interior part of the set $T$);
	\item there exists  a positive constant $C_0>0$ such that for  every $j$ there exists a point $x^j$ so that
	$$
	B_{ C_0 r}(x^j)\subseteq T^j\subseteq B_{ r}(x^j),
	$$
	where $r=\max_{i}{\operatorname{diam}  }(T_i)$ is the size of the triangulation.
\end{enumerate}
We say that a function $f$ is \emph{piecewise constant relatively to $\C T$} if 
$f$ is constant in $\operatorname{int}(T^j)$ for every $j\in\{1,\ldots, M\}$. 
\end{definition}

\begin{theorem}[Piecewise constant approximation]\label{prop Piecewise constant approximation}
	Let $\Omega$ be a simply connected open set with Lipschitz boundary, and let $\mu \in [\C M(\Omega)]^{N\times n}$ with $\Div \mu= 0$ be given. Then  there exists a sequence of measures $\mu_k\in [\C M(\Omega)]^{N\times n}$  such that $\mu_k\wsconv\mu$, with $\mu_k=A_k \C L^n$ and $A_k$ is piecewise constant relatively to a sequence of admissible triangulation $\C T_k$,    and $\Div \mu_k=0$ in $\Omega$. Furthermore $\lim_k \|\mu_k\|(\Omega)=\|\mu\|(\Omega)$.
\end{theorem}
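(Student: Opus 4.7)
The plan is a two-level approximation combined via a diagonal extraction: first smooth $\mu$ by mollification into a smooth divergence-free field, and then approximate the smooth field by a piecewise constant divergence-free field on a fine admissible triangulation. The delicate point at both levels is the preservation of the divergence-free constraint, and for this I would work throughout with a skew-symmetric tensor potential of $\mu$ rather than with $\mu$ itself.

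For the piecewise constant step the key observation is that on the simply connected domain $\Omega$ every smooth divergence-free vector field $v$ (applied row by row to a matrix-valued field) admits a skew-symmetric tensor potential $W\in C^\infty$ with $v_i=\sum_j \partial_j W_{ij}$: in dimension $n=3$ this is just a vector potential $v=\Curl A$, and for general $n$ it follows from the Poincar\'e lemma for the closed $(n-1)$-form dual to $v$. Given a regular admissible triangulation $\C T_k$ of $\Omega$ of mesh size $r_k\to 0$, I would let $\Pi_k W$ be the componentwise nodal piecewise linear interpolant of $W$ on $\C T_k$; since $\Pi_k W$ is globally continuous, its distributional first derivatives coincide with its (piecewise defined) classical ones. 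Define the piecewise constant field $A_k$ by $(A_k)_i:=\sum_j \partial_j (\Pi_k W)_{ij}$. Then for every $\varphi\in C_c^\infty(\Omega)$,
$$
\sum_i \int_\Omega (A_k)_i\,\partial_i \varphi\,dx = -\sum_{i,j}\int_\Omega (\Pi_k W)_{ij}\,\partial_j\partial_i \varphi\,dx = 0,
$$
the last equality by antisymmetry of $W$ and symmetry of the second partials, so $A_k\C L^n$ is divergence-free on $\Omega$. Moreover standard finite element estimates give $\|A_k-v\|_{L^\infty(\Omega)}\to 0$ for fixed smooth $v$, from which strict convergence $\|A_k\C L^n\|(\Omega)\to\|v\C L^n\|(\Omega)$ follows.

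For the smoothing step I would use the same potential idea in reverse: since $\Omega$ is simply connected, I would first represent $\mu$ via a potential $W\in BV_{\mathrm{loc}}(\Omega)$ with values in skew-symmetric tensors (closed $(n-1)$-currents on simply connected sets are exact), extend $W$ by a standard Lipschitz/Whitney extension to an open set $\Omega'\supset\supset \Omega$ chosen so that $\|\tilde W\|$ does not charge $\partial\Omega$, and then recover an extension $\tilde\mu$ of $\mu$ on $\Omega'$ which is still divergence-free and satisfies $\|\tilde\mu\|(\partial\Omega)=0$. Mollification $v_\eps:=\rho_\eps*\tilde\mu$ then yields a smooth divergence-free field, weakly$^*$-converging to $\mu$ on $\Omega$, with the classical strict convergence property $\|v_\eps\C L^n\|(\Omega)\to\|\mu\|(\Omega)$.

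Combining the two constructions by a diagonal extraction $k=k(\eps)$ gives the required sequence $\mu_k=A_{k(\eps_k)}\C L^n$. The main obstacle I anticipate is not the interpolation step, which is essentially classical finite element analysis, but the divergence-preserving extension: ensuring that $\tilde\mu$ carries no mass on $\partial \Omega$ (so that mollification yields strict rather than merely weak$^*$ convergence) and handling the existence and regularity of the potential $W$ uniformly in $n\ge 2$ is the delicate part.
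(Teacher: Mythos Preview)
Your interpolation step---passing to a potential of a smooth divergence-free field and taking the piecewise linear interpolant on an admissible triangulation---is exactly what the paper does, and your antisymmetry argument for $\Div A_k=0$ is clean.

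The gap is in the smoothing/extension step. The assertion that a general divergence-free measure $\mu$ on $\Omega\subset\R^n$, $n\ge 3$, admits a potential $W\in BV_{\mathrm{loc}}$ is false. Take $n=3$ and $\mu=b\otimes t\,\ha^1\mres\gamma$ concentrated on a closed curve: the Biot--Savart potential $A$ behaves like $1/r$ near $\gamma$, so $\nabla A\sim 1/r^2$, and $\int_0^R r^{-2}\cdot r\,dr$ diverges, hence $\nabla A$ is not even a locally finite measure. Poincar\'e's lemma does give you a primitive as a current, but with no better regularity than that, and Whitney extension is meaningless for such objects. Consequently your chain ``find $W$ for $\mu$, extend $W$, recover $\tilde\mu$, mollify'' breaks at the first link, and with it the control of $\|\tilde\mu\|(\partial\Omega)$ you need for strict convergence.

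The paper reverses the order to avoid this entirely: it first mollifies $\mu$ \emph{inside} $\Omega$ by averaging push-forwards under the diffeomorphisms $H_z(x)=x+d(x)z$, $z\in B_1$, where $d$ is a smoothed distance to $\partial\Omega$; push-forward preserves solenoidality, so the result $F_\eps\in C^\infty(\Omega)\cap L^1(\Omega)$ is still divergence-free, and the variable-radius averaging gives strict convergence directly. Only then does it extend $F_\eps$ (now an $L^1$ field, not a measure) across $\partial\Omega$ by a bi-Lipschitz reflection, re-mollify with a standard kernel to gain $C^\infty$ up to the boundary of a slightly smaller $\Omega'$, and finally invoke the potential. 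The point is that the potential is applied only to a field already smooth on a neighbourhood of $\overline\Omega$, where its existence and $C^\infty$ regularity are unproblematic.
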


The proof of Theorem \ref{prop Piecewise constant approximation} will be given essentially in  two steps. At first, in Lemma \ref{lemma convolution of measure whole domain}, we approximate $\mu$ via measures having smooth densities up to the boundary. In the second step  we reduce to measures which are piecewise constant with respect to a triangulation of $\Omega$.
In both cases the main difficulty is given by the free divergence constraint, and in order to modify the measures while preserving this constraint it will be convenient to interpret $\mu$ as a current, since the push forward of a current preserves solenoidality.

We start by regularizing the measures. The following lemma is an adaptation to the present context of Proposition 6, Chapter 5 in \cite{giaquinta1998cartesian}. 

\begin{lemma}[Smoothing]\label{lemma convolution of measure whole domain}
Let $\Omega\subset \BR^n$ be a bounded open Lipschitz set, let $\mu\in [\mes]^{N\times n}$ be divergence free, then it exists a family of functions $F_\eps\in [L^1(\Omega)]^{N\times n}\cap[C^\infty(\Omega)]^{N\times n}$ such that
\begin{itemize}
	\item[i)]  $F_\eps \C{L}^n \wsconv \mu$ in the sense of measures;
	\item[ii)] $\|F_\eps \C{L}^n \|(\Omega)\rightarrow \|\mu\|(\Omega)$;
	\item[iii)] $\Div F_\eps =0$ in $\Omega$.
\end{itemize} 
\end{lemma}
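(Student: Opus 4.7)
The strategy is to combine a push-forward by a Lipschitz diffeomorphism $\Phi_\eps$ that shrinks $\overline\Omega$ into a compactly contained subset with a standard mollification, both operations being designed to preserve the divergence-free constraint. For the first ingredient, I would use the Lipschitz structure of $\partial\Omega$ in the standard way: cover $\partial\Omega$ by coordinate patches in which $\Omega$ is locally the epigraph of a Lipschitz function, move the graph inward a distance of order $\eps$ via a partition of unity, and extend by the identity away from the boundary. This produces a family $\Phi_\eps:\overline\Omega\to\overline\Omega$ converging to the identity uniformly together with their gradients and satisfying $\mathrm{dist}(\Phi_\eps(\overline\Omega), \partial\Omega) \geq c\eps$.

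The key step is to push forward each row $v$ of $\mu$ (a divergence-free vector-valued measure) by $\Phi_\eps$ using the Piola formula, defined by duality as $\int \varphi \cdot d(\Phi_{\eps,\#} v) := \int (D\Phi_\eps)^T(\varphi\circ \Phi_\eps) \cdot dv$ for $\varphi \in [C_c^\infty(\Omega)]^n$. A direct calculation (equivalent to the statement that push-forward commutes with the boundary operator on currents) shows that $\Div(\Phi_{\eps,\#} v) = 0$, while $D\Phi_\eps\to I$ uniformly yields $\|\Phi_{\eps,\#}\mu\|(\Omega)\to \|\mu\|(\Omega)$ and $\Phi_{\eps,\#}\mu \wsconv \mu$. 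Since $\mathrm{supp}(\Phi_{\eps,\#}\mu)\subset\subset \Omega$, setting $F_\eps:=\rho_{\eta(\eps)}*\Phi_{\eps,\#}\mu$ with $\eta(\eps)$ smaller than the distance from this support to $\partial\Omega$ and $\rho_\eta$ a standard mollifier yields a smooth, divergence-free (convolution commutes with differentiation), compactly supported approximation. The estimate $\|\rho_\eta*\nu\|_{L^1}\leq \|\nu\|(\Omega)$ together with weak-$*$ lower semicontinuity of the total variation give strict convergence after a diagonal extraction in $\eps$ and $\eta$.

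The main obstacle is the preservation of the divergence-free constraint under push-forward: the naive measure-theoretic push-forward $\Phi_{\eps,\#}\mu(E)=\mu(\Phi_\eps^{-1}(E))$ does not preserve $\Div=0$, and one must use the Piola formula above, which absorbs the Jacobian arising from the change of variables. A related subtlety is that $\Phi_\eps$ is only Lipschitz, so either one pre-regularizes $\Phi_\eps$ by a further mollification (preserving the shrinking property) or one invokes the current-theoretic formulation $\Phi_{\eps,\#}T(\omega)=T(\Phi_\eps^* \omega)$, which is well defined for Lipschitz maps acting on currents of locally finite mass.
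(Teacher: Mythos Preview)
Your proposal is correct and rests on the same underlying principle as the paper's proof---that the Piola (current) push-forward commutes with the boundary operator and hence preserves $\Div=0$---but the implementation is genuinely different. You proceed in two separate steps: first compress $\mu$ into a compactly supported measure via a global diffeomorphism $\Phi_\eps:\overline\Omega\to\overline\Omega$ using the Piola formula, then convolve with a standard mollifier of radius $\eta(\eps)$ smaller than the distance to $\partial\Omega$. The paper instead performs a single operation, a variable-radius mollification adapted from Giaquinta--Modica--Sou\v{c}ek: with $d(x)$ a smooth minorant of $\mathrm{dist}(x,\Omega^c)$ and $H_z(x)=x+d(x)z$, it sets $\varphi_\eps(x)=\int_{B_1}\rho_\eps(-z)\,\varphi(H_z(x))\,\nabla_xH_z(x)\,dz$ (the $\rho_\eps$-average of the Piola pull-backs $H_z^*\varphi$) and defines $\langle\mu_\eps,\varphi\rangle:=\langle\mu,\varphi_\eps\rangle$. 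This produces an explicit smooth density $F_\eps$ on all of $\Omega$ in one stroke; the divergence-free check reduces to the observation that $\psi\circ H_z\in C^1_c(\Omega)$ whenever $\psi\in C^1_c(\Omega)$. Your route is arguably more transparent (standard tools applied sequentially) and yields $F_\eps$ with compact support, which is not needed here but can be convenient; the paper's approach avoids the choice of $\eta(\eps)$ and gives the total-variation bound $\|\mu_\eps\|(\Omega)\le(1+2\eps)\|\mu\|(\Omega)$ directly from the explicit formula.
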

\begin{proof}
Let $d\in C^\infty(\Omega)$ be a smooth version of $d(\cdot,\Omega^c)$, namely a function satisfying  $0< d(x)< d(x,\Omega^c)$ and $|\nabla d(x)|\leq 1 $ for all $x\in\Omega$. We define, for every $z\in B_1(0)$, $H_z(x):=x+d(x)z$ and observe that $H_z(\Omega)=\Omega$,  $\nabla_x H_z(x)=Id+z\otimes \nabla d(x)$. Let $\rho_\eps$ be a convolution kernel and define the regularization of $\varphi \in [C_0(\Omega)]^{N\times n}$ for all $x \in \Omega$ as follows
\begin{align*}
     \varphi_\eps(x)&:=\int_{B_1(0)}\rho_\eps (-z)  \varphi(x+zd(x))\nabla_x H_z(x)dz\\
     &=\int_\Omega{d(x)^{-n}\rho_\eps\left(\frac{x-y}{d(x)}\right)\varphi (y)\left ( Id+\frac{y-x}{ d(x)}\otimes \nabla d(x)\right)dy},
\end{align*}
where we performed the change of variable $y=x+d(x)z$.
We then define by duality the mollification of $\mu$ to be $\langle \mu_\eps, \varphi\rangle := \langle \mu, \varphi_\eps\rangle$, hence
\begin{align*}
    \langle \mu_\eps, \varphi \rangle &=  \int_\Omega \left(\int_{B_1(0)}\rho_\eps (-z)  \varphi(x+zd(x))\nabla_x H_z(x)d z\right)d \mu(x)\\
    &=\int_\Omega \langle\int_\Omega  d(x)^{-n}\rho_\eps\left(\frac{x-y}{d(x)}\right)\varphi(y)\left ( Id+\frac{y-x}{ d(x)}\otimes \nabla d(x)\right)d y,G(x)\rangle d \|\mu\|(x),
\end{align*}
where $G\in [L^1(\Omega, \|\mu\|)]^{N\times n}$ is such that  $|G|=1$ $\|\mu\|$-a.e. and $G \|\mu\|=\mu$.

Rearranging the integrals in the definition of $\mu_\eps$ and using Fubini's Theorem it is easy to see that 
$\mu_\eps<\!\!<\C L^n$ with density function defined for $y\in \Omega$ by
\begin{equation}
        F_\eps(y):=\int_\Omega  d(x)^{-n}\rho_\eps\left(\frac{x-y}{d(x)}\right)G(x)(Id+  \nabla d(x)\otimes\frac{y-x}{d(x)} )d \|\mu\|(x).
    \end{equation}
Clearly $F_\eps \in [C^\infty(\Omega)]^{N\times n}$.

By uniform continuity,  $\varphi_\eps$ converges uniformly in $\Omega $ to $\varphi$, and thus $\mu_\eps \wsconv \mu$ in $[\mes]^{N\times n}$, which proves i). Furthermore it holds
\begin{align*}
|\langle
\mu_{\eps},\varphi\rangle|&=\left| \int_\Omega\langle\int_{B_\eps(0)} \rho_{\eps}(-z)(Id+\nabla d(x) \otimes z)\varphi(x+zd(x))d z,G(x)\rangle d \|\mu\|(x)\right|\\
    &\leq\int_\Omega (1+2\eps)\|\varphi\|_\infty d \|\mu\|(x),
\end{align*}
hence 
\begin{equation}
    \|\mu_{\eps}\|(\Omega)\leq (1+2\eps)\|\mu\|(\Omega),
\end{equation}
thus $\lim_{\eps\rightarrow0}\|\mu_{\eps}\|(\Omega)=\|\mu\|(\Omega)$, and therefore ii) holds.

Finally we now prove that $\mu_\eps$ is  row-wise divergence free, i.e.,
\begin{equation}
   \sum_{j=1}^n\int_\Omega F_\eps^{ij}(y)\partial_j \psi(y) d y=0 , \ \ \ \ \forall \psi \in C_c^1(\Omega), \ \ i=1,\cdots, N.
\end{equation}
To see this, we denote with $G^i$ the $i$-th row of $G$, then we compute
\begin{align*}
    &\sum_{j=1}^n\int_\Omega F_\eps^{ij}(y)\partial_j \psi(y)d y\\
    &=\sum_{j,l=1}^n\int_{B(0,1)} \int_\Omega  \rho_\eps(z)G_{il}(x)\partial_l (H_z)^j(x) \partial_j \psi(x+d(x)z) d \|\mu\|(x) d z\\
    &= \int_{B(0,1)} \rho_\eps(z)\int_\Omega   \langle G^i,\nabla (\psi \circ H_z)(x)\rangle  d \|\mu\|(x)d z,
\end{align*}
where we used the change of variables $y=x+d(x)z$ and, in the last equality, we used the fact that 
$$
\sum_{j=1}^n \partial_l (H_z)^j(x) \partial_j \psi(x+d(x)z) = \partial_l(\psi \circ H_z)(x) .
$$
 The claim now follows since $\psi \circ H_z \in C_c^1(\Omega) $ and $\mu$ is divergence free.

\end{proof}
We say that a mapping $\phi$ is a potential for a divergence free matrix field $F$ if $\C R \phi =F$ for some first order linear differential operator $\C R$ satisfying $\operatorname{div} \C R\psi = 0$ for every $\psi \in [C^\infty(\Omega)]^{N\times m}$ where $m=n(n-1)/2$. For example if $n=3$ then $\C R=\operatorname {curl}$. We observe that if $\Omega$ is simply connected then such an operator $\C R$ always exists as a direct consequence of Poincaré's Lemma.

Thanks to Lemma \ref{lemma convolution of measure whole domain}, we simply need to prove Theorem \ref{prop Piecewise constant approximation} for divergence free measures of the form $\mu=F\C L^n$. To do so we would like to pass to a potential $\phi$ of $F$, and then approximate $\phi $ with piecewise affine functions by linear interpolating over a sequence of triangulations of $\Omega$.
In order to show the convergence of the interpolating sequence to $\phi$ we need boundedness of its derivatives (see for instance \cite{LeoniSobolev}), while from Lemma \ref{lemma convolution of measure whole domain} we can only infer  $\phi\in[C^\infty(\Omega)]^{N\times m}$. To obtain such bound we will modify $F$, since clearly a uniform bound for the  derivatives of $F$ implies bounds for the derivatives of $\phi$. With this goal in mind we state below an extension lemma  for $F$ whose proof follows closely the one of a similar extension lemma proved in  \cite[Lemma 2.3]{ConGarMas15} in the context of $1$-rectifiable currents.

\begin{lemma}[Extension]\label{lemma extension}
Let $\Omega\subset\BR^n$ be a  bounded Lipschitz set. There exist an open bounded set $\hat\Omega$ compactly containing $\Omega$ such that for every $F\in [L^1(\Omega)]^{N\times n}$ with $\Div  F=0$ in $\Omega$, there is a function $\hat F\in [L^1(\hat\Omega)]^{N\times n}$, with $\Div  \hat F=0$ in $\hat\Omega$, such that $\hat F=F$ in $\Omega$.
In particular the measure $\hat{\mu}=\hat F \C L^n\in [\C M(\hat\Omega)]_{df}^{N\times n}$ extends the measure $\mu=F \C L^n$.
\end{lemma}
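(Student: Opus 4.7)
The plan is to build $\hat F$ as the push-forward of $F$ under a bi-Lipschitz ``reflection'' map $R$ that sends a one-sided exterior tubular neighborhood of $\partial\Omega$ onto an interior one. The key point is that, interpreting each row of $F\C L^n$ as an $(n-1)$-current, the push-forward commutes with the boundary operator, so the solenoidality of $F$ is automatically preserved by $R_\#$ and one only has to check matching of normal traces on $\partial\Omega$.

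For the construction of $R$ I would exploit the Lipschitz regularity of $\partial\Omega$ in the usual way: cover $\partial\Omega$ by finitely many open balls $B_\alpha$ in which, after a rigid motion, $\Omega\cap B_\alpha$ coincides with the epigraph of a Lipschitz function $\gamma_\alpha$. On each $B_\alpha$ the map $R_\alpha(x',x_n):=(x',2\gamma_\alpha(x')-x_n)$ is a bi-Lipschitz involution swapping the two sides of $\partial\Omega\cap B_\alpha$ and fixing the boundary pointwise. These local reflections cannot be glued directly, since they disagree on overlaps $B_\alpha\cap B_\beta$; instead, following \cite[Lemma 2.3]{ConGarMas15}, one uses them only to manufacture a single Lipschitz vector field $\nu$ in a neighborhood of $\partial\Omega$ that is uniformly transversal to $\partial\Omega$, obtained by averaging the inward normals of the local charts with a partition of unity. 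Letting $\Phi_t$ denote the flow of $\nu$, one sets $R(x):=\Phi_{2\tau(x)}(x)$, with $\tau(x)$ the (Lipschitz) hitting time of $\partial\Omega$ along the flow; for a sufficiently thin neighborhood this $R$ is a bi-Lipschitz homeomorphism between one-sided neighborhoods $V_\pm$ of $\partial\Omega$ which fixes $\partial\Omega$ pointwise and reverses orientation in the transversal direction.

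Having $R$ at hand, I would set $\hat\Omega:=\Omega\cup V_-$, $\hat F:=F$ on $\Omega$, and $\hat F:=R_\# F$ on $V_-$, where $R_\#$ denotes the standard push-forward on vector fields defined via the cofactor formula for bi-Lipschitz changes of variables. The bi-Lipschitz control on $R$ gives $\hat F\in [L^1(\hat\Omega)]^{N\times n}$ by the area formula, and the identity $\partial(R_\#T)=R_\#(\partial T)$ for currents yields $\Div\hat F=0$ row-wise in $V_-$. Matching of normal traces on $\partial\Omega$ is automatic because $R$ fixes the boundary pointwise with orientation-reversing normal Jacobian, and a short distributional computation then gives $\Div\hat F=0$ across $\partial\Omega$ on all of $\hat\Omega$. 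The main obstacle is precisely the globalization step leading to $R$: a partition-of-unity argument applied directly to $F$ would destroy the divergence-free constraint, since that constraint is nonlocal, and the flow-based construction circumvents this difficulty by performing the patching at the level of the unconstrained vector field $\nu$ and then letting the push-forward machinery of currents take care of preserving solenoidality at a single stroke.
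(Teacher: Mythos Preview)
Your proposal follows essentially the same strategy as the paper: construct a bi-Lipschitz reflection across $\partial\Omega$ from a transversal vector field and extend $F$ by the push-forward (cofactor) formula, invoking the same reference \cite[Lemma~2.3]{ConGarMas15}. The underlying mechanism is identical.

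Two small simplifications in the paper's implementation are worth knowing. First, instead of building $\nu$ in a neighborhood and using its flow and a hitting-time function, the paper works with a $C^1$ field $N$ defined only on $\partial\Omega$ and transversal to it, and uses the \emph{affine} collar map $g(x,t)=x+tN(x)$, setting the reflection to be $h(g(x,t))=g(x,-t)$; this avoids the regularity bookkeeping for the flow and for $\tau$. Second, rather than matching normal traces across $\partial\Omega$ (the step you call ``automatic'' but which does need a careful argument for $L^1$ fields), the paper tests $\hat F$ against $\nabla\psi$ for $\psi\in C_c^1(\hat\Omega)$ and shows, via the change of variables $y=h(x)$, that this equals $\int_\Omega\langle\nabla[\psi-(\chi_{D_0\setminus\Omega}\psi)\circ h],F\rangle$. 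Since the test function in brackets lies in $W^{1,\infty}_0(\Omega)$, this vanishes by the divergence-freeness of $F$ in $\Omega$ alone, and the interior, exterior, and boundary gluing are handled in a single stroke without ever invoking normal traces.
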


\begin{proof}
We will prove the lemma, first in the case of $F\in [L^1(\Omega)]^{n}$ with $\Div F=0$ in $\Omega$, then the matrix valued case will follow simply by extending row-wise.\\
Choose a function $N \in C^1 (\partial\Omega; \mathbb{S}^{n-1} )$, such that $N (x) \cdot \nu(x) \geq \alpha > 0$ for
almost all $x \in \partial \Omega$, where $\nu$ is the outer normal to $\partial \Omega$ (see \cite{Ho07} for details).
Consider the mapping $g : \partial\Omega \times (-\rho_0, \rho_0) \rightarrow \BR^n$ defined by $g(x, t) = x + tN (x)$, then there exists $\rho_0$ sufficiently small such that  $g$ is bijective and bi-Lipschitz: indeed, by Local Invertibility Theorem for Lipschitz mappings (see \cite{Cl83}), there exists $\rho_0$ small enough such that $g$ is locally invertible  with Lispchitz inverse function, furthermore, since $g(x ,0) =x$, we also get global invertibility (see \cite{belnovpao01} Lemma 33). Let $D_0 = g(\partial\Omega\times (-\rho_0, \rho_0))$ and $h : D_0\rightarrow D_0$
be defined by $h (g(x,t))= g(x,-t)$. Then $h$ is bi-Lipschitz and coincides with
its inverse. We then set $\hat \Omega:=\Omega\cup D_0$ and we define the extension 
$\hat{F}(y)=\chi_{\Omega}F(y)+\chi_{D_0 \smallsetminus \Omega}\nabla h(h^{-1}(y))F(h^{-1}(y))|\nabla h^{-1}|$, for all $y\in \hat\Omega$.

We now show that $\hat F$ is divergence free. We compute, for every $\psi \in C_c^1(\hat\Omega)$, 
\begin{equation*}
	\begin{split}
	\int_{\hat \Omega}\langle \nabla \psi , \hat F \rangle dy &=\int_\Omega\langle  \nabla \psi ,F\rangle dy +
	\int_{D_0\setminus\Omega}\langle \nabla \psi ,  \nabla h(h^{-1}(y))F(h^{-1}(y))\rangle |\nabla h^{-1}| dy\\
	&= \int_\Omega\langle  \nabla \psi ,F \rangle dy +
\int_{\Omega\cap D_0}\langle \nabla[\psi\circ h](x), F(x)\rangle dx.
	\end{split}
\end{equation*}
%
We then  observe that, by direct computation, it holds  
$$
D[(\chi_{D_0 \smallsetminus \Omega}\psi)\circ h]= \chi_{\Omega\cap  D_0}(x)\nabla[\psi\circ h] ,
$$ 
in the sense of distributions in $\Omega$, thus in particular $ (\chi_{D_0 \smallsetminus \Omega}\psi)\circ h \in W^{1,\infty}(\Omega)$. Therefore we obtain  
\begin{equation*}
    \int_{\hat \Omega}\langle \nabla \psi , \hat F \rangle dy=\int_\Omega\langle  \nabla [\psi-(\chi_{D_0\setminus \Omega}\psi)\circ h] ,F \rangle dy.
\end{equation*}
The right hand side  is zero since $\varphi=\psi -(\chi_{D_{0} \smallsetminus \Omega}\psi)\circ h\in W^{1,\infty}_0(\Omega)$: indeed it is  zero for all $\varphi\in C^\infty_0(\Omega)$ and then also in $W^{1,\infty}_0(\Omega)$ by  density of smooth and compactly supported functions with respect to the weak$*$ topology in $W^{1,\infty}(\Omega)$.

The general case is obtained by using the above construction row-wise.
\end{proof}

We are ready to prove Theorem \ref{prop Piecewise constant approximation}.

\begin{proof}[Proof of Theorem \ref{prop Piecewise constant approximation}]
Thanks to Lemma \ref{lemma convolution of measure whole domain}, we first find a sequence of fields $F_h\in [L^1(\Omega)]^{N\times n}\cap[C^\infty(\Omega)]^{N\times n}$ such that  $\mu_h=F_h\C L^n \wsconv\mu$, $\Div F_h=0$ in $\Omega$ and $\lim_{h\to+\infty}\|\mu_h\|(\Omega)=\|\mu\|(\Omega)$.

Using Lemma \ref{lemma extension} we can find a set $\hat\Omega $ compactly containing $\Omega$ and a sequence of functions $\hat F_h$ extending $F_h$ such that $\Div \hat F_h=0 $ in $\hat \Omega$.
We then choose $\Omega'$ such that $\Omega \subset\subset\Omega^\prime\subset\subset\hat\Omega$ and by (standard) convolution we can now  find a sequence $G_h\in [L^1(\Omega^\prime)]^{N\times n}\cap [C^\infty(\Omega^\prime)]^{N\times n}$, with $\Div G_h =0 $ in $\Omega^\prime$, such that $G_h \C L^n $ converges strictly to $\mu$.
We then apply Poincaré Lemma to $G_h$ and find a potential $\phi^h\in [C^\infty(\Omega^\prime)]^{N\times n(n-1)/2}$, such that $\C R \phi^h =G_h$.

Now we fix a sequence of admissible triangulations 
$$\C T_k:=\{T_k^i\}_{i=1,\ldots, M(k)},
$$
of aspect ratio $C_0$ and size $1/k$, such that $ \Omega\subset\subset \bigcup_{i=1}^{M(k)} T_k^i \subset \subset \Omega^\prime$, and for every $h$ we construct a sequence of piecewise affine functions $\phi^h_k$ obtained interpolating linearly the values of $\phi^h$ on the vertices of the tetrahedra $T_k^i$. By classical discretization arguments (see for instance \cite{LeoniSobolev}, Theorem 11.40) we have that
 $$
 \|\nabla \phi^h_k-\nabla \phi^h\|_{L^1(\Omega)}\leq \frac{C}{k^2}\sup_{\Omega} |\nabla^2\phi^h |,
 $$
where the constant $C$ depends on $C_0$.
  Hence $G^h_k:=\C R\phi^h_k$ converges  to $G_h$ in $L^1$ for every $h$. Furthermore $\Div  G^h_k=0$ in $\Omega$ and $G^h_k$ is piecewise constant.
  
  In conclusion by a diagonal procedure we obtain a sequence of piecewise constant divergence free fields that converge strictly to $\mu$ in $\Omega$.
\end{proof}

\subsection{Optimal construction via polygonal supported measures}
From now on we focus on the special case $n=3$. Indeed the constructions we perform in Lemma \ref{lemma recovery on a simplex 2 } and Lemma \ref{lemma approximation from piecewise constant to straight segment  } depend on the dimension: while in the case $n=3$ the shared face of two simplex is $2$ dimensional, in the  generic case two neighbouring simplex share a  $n-1$ dimensional face. Nevertheless we expect that our construction can be adapted to any dimension.

We now show a second approximation result that is closely related with our energies. We will need to show that any piecewise constant divergence free measure can be obtained as a limit of a sequence of measures (concentrated on lines) with equi-bounded energy. This density result requires a rather technical construction, a by-product of which is the theorem stated below and proved at the end of this section. 

\begin{theorem}\label{theo approximation straight lines}
	Given a divergence free measure $\mu \in [\C M(\Omega)]^{N\times 3}$, there exists a sequence of polyhedral measures $\mu_k\in \C M^1_{df}(\Omega;\C Q\times \B S^{2})$ such that $\mu_k\wsconv\mu$.
\end{theorem}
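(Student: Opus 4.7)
The plan is to combine Theorem \ref{prop Piecewise constant approximation} with an explicit construction on each tetrahedron of the triangulation. A diagonal argument will then deliver the general statement.

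\textbf{Step 1 (reduction to piecewise constant fields).} By Theorem \ref{prop Piecewise constant approximation} there exists a sequence $\nu_k = A_k \C L^3 \wsconv \mu$ with $A_k$ piecewise constant relative to an admissible triangulation $\C T_k = \{T_k^i\}$ and $\Div A_k = 0$ in $\Omega$. Hence, via a standard diagonal argument, it is enough to approximate each such piecewise constant divergence-free field by polyhedral measures in $\C M^1_{df}(\Omega;\C Q \times \B S^2)$.

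\textbf{Step 2 (flux bookkeeping across faces).} Fix one such field with constant values $A^i$ on $T^i$. The condition $\Div A_k = 0$ in the distributional sense is equivalent to the interface relations $A^i n_{ij} = A^j n_{ij}$ on every shared face $F_{ij} = T^i \cap T^j$, together with $A^i n = 0$ on the faces lying on $\partial\Omega$ (which is where simple connectedness is used in combination with Remark \ref{obs meaning of divergence free constraint} to ensure that curves can exit $\Omega$ only in a controlled way). On each interior face $F_{ij}$ I would discretise the flux vector $A^i n_{ij} \in \R^N$ as a finite configuration of ``entry/exit points'' $\{x_\alpha^{ij}\}$ with $\C Q$-valued weights $\{b_\alpha^{ij}\}$ whose total mass approximates $(A^i n_{ij}) \ha^2 \mres F_{ij}$. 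The same configuration is seen with opposite orientation from $T^j$, so gluing will produce a polyhedral current with no endpoints inside $\Omega$.

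\textbf{Step 3 (construction inside a simplex).} Inside each tetrahedron $T^i$ one then has to construct a finite union of straight segments, each carrying a constant multiplicity in $\C Q$, such that: (a) the segments enter and exit $T^i$ precisely at the prescribed boundary points with the prescribed Burgers vectors, guaranteeing global divergence-freeness after gluing; (b) the resulting polyhedral measure on $T^i$ converges weak-$*$ to $A^i \C L^3 \mres T^i$ as the face discretisations are refined. A Carathéodory-type decomposition of $A^i$ into rank-one pieces $b \otimes t$ with $b \in \C Q$ and $t \in \B S^2$ (in the spirit of the decomposition used in Lemma \ref{lemma properties g}) dictates the directions and multiplicities, while matching of the fluxes on the four faces fixes the connectivity pattern. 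One fills $T^i$ with a $1/m$-spaced family of parallel segments in direction $t$ for each rank-one piece, and adds short ``connector'' segments routing prescribed boundary endpoints to the bulk family; as $m \to \infty$ the bulk part averages to the correct constant field, while the connectors contribute negligible mass.

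\textbf{Main obstacle.} The crux is Step 3: producing, inside a single simplex, polyhedral $\C Q$-valued arcs that simultaneously realise prescribed boundary endpoint data and weak-$*$-approximate a prescribed constant rank-$\leq 3$ matrix. This is exactly what the paper announces as Lemma \ref{lemma recovery on a simplex 2} and Lemma \ref{lemma approximation from piecewise constant to straight segment }, and it is where the restriction to $n=3$ enters, since the face matching problem across a $2$-dimensional interface in $\R^3$ is qualitatively different from the codimension-one case in higher dimensions. Once this local construction is available, the global polyhedral approximation follows by assembling the pieces across the triangulation and taking a diagonal sequence in the mesh size of $\C T_k$ and in the refinement parameter $m$.
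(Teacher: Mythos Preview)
Your outline matches the paper's proof: reduce to piecewise constant via Theorem~\ref{prop Piecewise constant approximation}, invoke Lemmas~\ref{lemma recovery on a simplex 2 } and~\ref{lemma approximation from piecewise constant to straight segment } for the local construction and gluing, then diagonalise. Two points require correction, however.

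First, $\Div A_k=0$ in $\Omega$ does \emph{not} imply $A^i n=0$ on faces contained in $\partial\Omega$: the normal trace on $\partial\Omega$ is unconstrained by the interior divergence condition. The paper sidesteps this entirely because an admissible triangulation satisfies $\Omega\subset\subset\bigcup_i T^i$ (Definition~\ref{def-triangulation}(i)), so every face meeting $\overline\Omega$ is shared by two tetrahedra and only the interface relation $A^i n=A^j n$ of Remark~\ref{remark piecewise constant on normal vector} is needed; the polyhedral measure is built divergence-free on the larger triangulated set and then restricted to $\Omega$. Your version, with a triangulation of $\Omega$ itself and a vanishing-flux condition on boundary faces, would not go through.

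Second, simple connectedness is not what lets curves exit $\Omega$ in a controlled way; it enters only in the proof of Theorem~\ref{prop Piecewise constant approximation}, where Poincar\'e's Lemma furnishes a potential $\phi$ with $\C R\phi=G_h$ so that one can interpolate at the level of the potential and keep the divergence constraint.
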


A result of this type can actually be obtained as a consequence of  the celebrated result of Smirnov \cite{Smir} which shows that every normal current without boundary in $\BR^n$ can be decomposed in elementary solenoids. As Bourgain and Brezis pointed out in \cite{BouBre}, this decomposition implies an approximation  for divergence free vector fields in terms of measures supported on curves. The proof of such approximation was given in \cite{GoHeSp}, where the authors show the existence of the approximating sequence by means of an argument that doesn't allow to choose the curves in the approximation. This feature clashes with our need to control the energy of the approximating sequence. Our result is instead constructive (see Lemma \ref{lemma recovery on a simplex 2 }) and will imply the $\Gamma$-limsup inequality in our main result.

More precisely we approximate $\mu$ by piecewise constant fields using Theorem \ref{prop Piecewise constant approximation}, then on  each tetrahedron of the triangulation we construct measures in $\C M_{df}^1(\Omega, \C Q\times \mathbb S^2)$ and then we glue these local approximations obtaining the result. This is the most delicate passage of the construction: indeed
gluing while preserving the divergence free constraint presents some difficulities, to overcome which it is important to choose the right boundary condition on each tetrahedron, see \ref{third item lemma on simplex} of Lemma \ref{lemma recovery on a simplex 2 } and Remark \ref{Remark explanation modification mass}.

We first start with  a single tetrahedron. To this aim we need to introduce some notation.

Given a tetrahedron $T\subset \BR^3$, we perform the following subdivision of its boundary $\partial T$ in closed triangles: consider
a face of $T$ with edges of length $l_1 \leq l_2 \leq l_3$, we divide each of these edges in $k$ segments of length ${l_i}/{k}$, $i\in\{1,2,3\}$,  and consequently we obtain a division of that face in $k^2$ (closed) triangles, denoted by $\Delta(h,k)$, with $h=1,\ldots, 4 k^2$, see Figure \ref{fig subdivision simplex}. Hence
\begin{equation}\label{def-division in subsimplexes}
    \partial T=\bigcup_{h=1}^{4k^2}\Delta(k,h).
\end{equation} 
 Note that there exists a universal constant $C>0$ such that  for all $(h,k)$ we have
	\begin{equation}
		\label{eq-measure-triangle}
		\mathcal{H}^2(\Delta(h,k))\leq C \frac{({\rm diam}(T))^2}{k^2}.
\end{equation}
We denote with $d(T,k,h)$ the baricenter of each triangle $\Delta(k,h)$, and we call $n_h$  the outer unit normal, with respect to $T$, in $\Delta(k,h)$.

\begin{figure}[t]
		\fontsize{8}{4}{
			\def\svgwidth{200pt}
		\includegraphics[width=6cm]{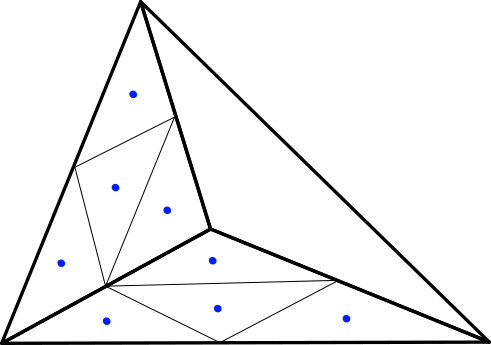}
   \caption[]{An example of the subdivision of the tetrahedron $T$, with two faces explicitly subdivided for $k=2$, where the blue dots are the baricenters of the
triangles obtained with the subdivision. }
			\label{fig subdivision simplex}
		}
	\end{figure}

\begin{lemma}\label{lemma recovery on a simplex 2 }
	Let $T\subset \BR^3$ be a $3$-simplex and $\C F$ a finite union of planes in $\BR^3$. Let $A\in \BR^{N\times 3}$ and assume that $A=\sum_{j=1}^M b^j\otimes t^j$, with $b^j \in \C Q$,  $t^j\in \mathbb{S}^2$. Then there exist sequences of polyhedral measures $\mu_k, \nu_k, \omega_k, \rho_k \in \C M^1_{{\rm loc}}(\R^3;\C Q\times\B S^2)$ such that $	\mu_k=\nu_k+\omega_k+\rho_k$ and  satisfy the following:
	\begin{enumerate}
		\item\label{zero item lemma on simplex}
		\begin{equation*}
		    \nu_k=\frac{1}{k^4}\sum_{j=1}^M  b^j\otimes t^j \ha^1\mres (\Gamma^j_k\cap T^k),
		\end{equation*}
 where  $\Gamma^j_k$ are straight lines parallel to $t^j$, satisfying 
	\begin{equation}
		\label{lemma-item-1-weak-convergence}
		\frac{1}{k^4}\ha^1\mres \Gamma^j_k\wsconv \C L^3,
	\end{equation}
and  $T_k\subset \subset T$ is a tetrahedron satisfying  ${\rm dist}(T_k,\partial T)\leq {\rm diam}(T)/k^2$; 
\item \label{ii of lemma simplex}$\mu_k\mres T=\nu_k+\omega_k$, $\mu_k\mres T^c=\rho_k$, $\|\mu_k\|(\partial T)=0$, $\ha^1(\operatorname{supp}\rho_k\cap \C F )=0$;
\item \label{iii of lemma simplex} for every compact set $K\subset \BR^3$ it holds $\lim_{k\to +\infty}\|\omega_k\|(K)+\|\rho_k\|(K)=0$;

		\item\label{first item lemma on simplex} for every $\varphi \in [ C_c(\BR^3)]^{N\times 3}$ it holds
		\[
		\lim_{k\rightarrow\infty}\int_T \varphi d \mu_k = \int_T\langle \varphi, A\rangle d x;
		\]
	
		\item \label{third item lemma on simplex} for every $\varphi \in [C_c^\infty(\BR^3)]^N$ it holds 
		$$
		\langle \mu_k, \nabla\varphi\rangle = \sum_{h=1}^{4k^2} \langle A n_h ,\varphi(d(T,k,h))\rangle \ha^2(\Delta(k,h)).
		$$
		
		 \end{enumerate}
\end{lemma}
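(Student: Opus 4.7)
I write $\mu_k=\nu_k+\omega_k+\rho_k$ with: $\nu_k$ built from $1/k^2$-spaced grids of parallel lines in each direction $t^j$ inside a shrunken tetrahedron $T_k\subset\subset T$; $\omega_k$ consisting of short ``legs'' in $T\setminus T_k$ routing each grid-line endpoint to the baricenter $d(T,k,h)$ of the triangle of $\partial T$ through which the line exits $T$; and $\rho_k$ a small correction placed outside $T$ that upgrades the approximate match in \ref{third item lemma on simplex} to an exact equality.

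\textbf{Construction of $\nu_k$ and $\omega_k$.} For each $j\in\{1,\dots,M\}$ fix a square lattice $L_j^k\subset\Pi_j:=(t^j)^\perp$ of side $1/k^2$, generically translated so that the lines $\Gamma_k^j:=L_j^k+\R t^j$ meet neither the vertices of the subdivision \eqref{def-division in subsimplexes} nor the planes in $\C F$; by Fubini, $\tfrac{1}{k^4}\ha^1\mres\Gamma_k^j\wsconv\C L^3$. Choose $T_k$ homothetic to $T$ with $\mathrm{dist}(T_k,\partial T)\sim\mathrm{diam}(T)/k^2$ and define
\[
\nu_k:=\tfrac{1}{k^4}\sum_{j=1}^M b^j\otimes t^j\,\ha^1\mres(\Gamma_k^j\cap T_k),
\]
which yields \ref{zero item lemma on simplex} and the weak$^*$ convergence of $\nu_k$ to $A\,\C L^3\mres T$. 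Each line $\ell\in\Gamma_k^j$ meeting $T_k$ enters and exits $T_k$ at two points $y^\pm\in\partial T_k$, whose prolongations along $\ell$ hit $\partial T$ inside triangles $\Delta(k,h^\pm)$. I set $\omega_k$ to be the sum, over all such $(\ell,j)$, of the two straight segments $[d(T,k,h^\pm),y^\pm]$ carrying multiplicity $b^j/k^4$ with tangent oriented from the baricenter toward $y^\pm$; this orientation makes the sources/sinks of $\nu_k+\omega_k$ cancel in pairs at every $y^\pm$, so the residual divergence inside $T$ is concentrated at the baricenters. A face-by-face count (the leg length $\sim 1/(k^2|t^j\cdot n_h|)+O(1/k)$ is absorbed by the number $\sim k^4\ha^2(F)|t^j\cdot n_h|$ of lines exiting through a face $F$) gives $\|\omega_k\|=O(1/k)\to 0$.

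\textbf{Exterior correction and verification.} Summing the per-line contributions, the divergence of $\nu_k+\omega_k$ at $d(T,k,h)$ equals $-\sum_j\mathrm{sgn}(t^j\cdot n_h)(P_j(h,k)/k^4)b^j$, where $P_j(h,k)$ is the number of lines of $\Gamma_k^j$ crossing $\Delta(k,h)$; the target $-A\,n_h\,\ha^2(\Delta(k,h))=-\sum_j(t^j\cdot n_h)\ha^2(\Delta(k,h))b^j$ differs from it by the lattice-counting error $E_h=\sum_jc_{j,h}b^j$, of size $O(1/k^3)$ per triangle and $O(1/k)$ summed over the $O(k^2)$ triangles of $\partial T$. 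I take $\rho_k$ to be a finite union of length-one segments supported outside $T$ and rooted at each baricenter, one per pair $(h,j)$: it carries multiplicity $|c_{j,h}|b^j\in\C Q$, its direction is chosen outward to $T$ and generic with respect to $\C F$, and its orientation is flipped whenever $c_{j,h}<0$, so that the divergence of $\rho_k$ at $d(T,k,h)$ is exactly $E_h\delta_{d(T,k,h)}$. This makes the divergence of $\mu_k$ at each baricenter equal to $-A\,n_h\,\ha^2(\Delta(k,h))$ (giving \ref{third item lemma on simplex}), and the mass bound $\|\rho_k\|=O(1/k)\to 0$ together with $\ha^1(\mathrm{supp}\,\rho_k\cap\C F)=0$ and the generic lattice translation (which guarantees $\|\mu_k\|(\partial T)=0$) closes \ref{ii of lemma simplex}--\ref{first item lemma on simplex}. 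The delicate step is the exactness of \ref{third item lemma on simplex}: the discrepancies $E_h$ never vanish for finite $k$, so the exterior correction is unavoidable, and its multiplicities must remain in $\C Q$, which forces the decomposition of $E_h$ along the $b^j$ and the sign-flipping of each correction segment to absorb $\mathrm{sgn}(c_{j,h})$.
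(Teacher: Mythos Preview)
Your approach is essentially the paper's: the grid $\nu_k$ inside a homothetically shrunken $T_k$, the short ``legs'' $\omega_k$ routing line endpoints on $\partial T_k$ to baricenters on $\partial T$, and an exterior correction $\rho_k$ fixing the lattice-counting error, with the same mass estimates $\|\omega_k\|,\|\rho_k\|(K)=O(1/k)$.

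There is, however, one genuine gap. You build $\rho_k$ from \emph{length-one} segments rooted at the baricenters. Each such segment has a second endpoint $q$ at distance $1$ from $d(T,k,h)$, and that endpoint contributes a Dirac mass to $\Div\rho_k$. Hence for any $\varphi\in[C_c^\infty(\R^3)]^N$ that does not vanish at those $q$'s, $\langle\mu_k,\nabla\varphi\rangle$ picks up extra terms $\pm\langle|c_{j,h}|b^j,\varphi(q)\rangle$, and \ref{third item lemma on simplex} fails as an \emph{exact} identity (which is precisely what is needed downstream to glue tetrahedra and obtain a globally divergence-free measure). The paper avoids this by taking each correction to be a \emph{half-line} starting at $d(T,k,h)$: then for compactly supported $\varphi$ the only boundary contribution is at the baricenter, and \ref{third item lemma on simplex} holds for every $k$. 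Replacing your finite segments by half-lines (still chosen generically to miss $\C F$) fixes the problem without affecting the mass bound, since on any compact $K$ one still has $\|\rho_k\|(K)\leq C\,\mathrm{diam}(K)\cdot 4k^2\cdot O(k^{-3})\to 0$.

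On one point you are more careful than the paper: by splitting $E_h$ along the $b^j$ and attaching one correction per pair $(h,j)$ with multiplicity $|c_{j,h}|b^j$, you ensure each multiplicity lies in $\C Q$. The paper places the full error $\ha^2(\Delta(k,h))An_h-B(T,k,h)$ on a single half-line, but a sum of elements of $\C Q$ need not be in $\C Q$; your $j$-wise decomposition is the right remedy, provided you keep the half-lines.
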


\begin{proof}
Without loss of generality we may assume that the baricenter of $T$ coincides with the origin and we define $T_k:= (1-\frac{1}{k^2})T$. Therefore $T_k$ is a tetrahedron similar to $T$ and ${\rm dist}(T_k,\partial T)\leq {\rm diam}(T)/k^2$.

	The construction is quite natural but  somewhat involved, therefore we shall present it in several steps.
	
	\smallskip
	\emph{Step 1. The segments inside $T_k$.}

We  define the approximating measures inside $T_k$.
 	For every $j\in\{1,\ldots, M\}$ we consider the $2$-dimensional vector space $\Pi^j$ whose normal is $t^j$. Let $v_1^j,v_2^j$ be an orthonormal base of this plane, and consider the square lattice on $\Pi^j$ defined by $\mathcal{G}^j_k=\operatorname{span}_{\mathbb{Z}}(\frac{1}{k^2}v_1^j,\frac{1}{k^2}v_2^j)$. We then  set $\Gamma_k^j:=\mathcal{G}^j_k+ \BR t^j$ and define
\begin{equation}
    \nu_k^j:=\frac{b^j}{k^4}\otimes t^j
 \ha^1\mres \Gamma_k^j.
\end{equation}
It is easy to check that $\nu_k^j\mres T_k\in [\C M(\BR^3)]^{N\times 3}$  and that for all $\varphi \in [C_c(\BR^3)]^{N\times 3}$ it holds
\begin{align}\label{convergence against bounded functions}
    \lim_{k\rightarrow +\infty}\langle \nu_k^j\mres T_k, \varphi\rangle =\lim_{k\rightarrow +\infty}\langle \nu_k^j\mres T, \varphi\rangle&=\langle b^j\otimes t^j\C{L}^3\mres T, \varphi \rangle .
\end{align}
The measure inside $T_k$ is then
\begin{equation}\label{eq-notation-lemma-tetraedro-1}
	\nu_k:=\sum_{j=1}^M\nu_k^j\mres T_k,
\end{equation}
which then converges weakly$*$ to $A\, \C L^3\mres T$.

We now subdivide $\partial T$ in triangles that we call ${\Delta(h,k)} $, for $h=1,\cdots ,4k^2$ as specified in \eqref{def-division in subsimplexes}, i.e., $ \partial T=\bigcup_{h=1}^{4k^2}\Delta(k,h)$ and $d(T,k,h)$ is the baricenter of $\Delta(k,h)$. This subdivision induces in turns a subdivision of $\partial T_k$ in triangles $\delta(h,k)=(1-\frac{1}{k^2})\Delta(h,k)$ by projecting from the origin the $\Delta(k,h)$ onto $\partial T_k$.  Hence we can write $ \partial T_k=\bigcup_{h=1}^{4k^2}\delta(k,h)$ with $|x-y|\less C\operatorname{diam}(T)k^{-1}$ for every $x\in \Delta(h,k), y\in\delta(k,h)$.

Up to removing a negligible number of lines (so that \eqref{convergence against bounded functions} still holds),  we can assume that  $\Gamma^j_k$ intersects $\partial T_k$ only on isolated points and that none of these points belong to more than one triangle $\delta(k,h)$. Indeed for each one of the $4k^2$ triangles  $\delta(k,h)$ there are at most $\C O(k)$ lines that intersect its contour, hence there are at most $\C O(k^3)$ lines that ought to be discarded but each line has a mass of $\C O(k^{-4})$, hence the total error is negligible in the limit.  We can also assume that $\ha^1(\Gamma_k^j \cap \Gamma_k^{i})=0 $ for every $j\not=i$.

\smallskip
\emph{Step 2. Definition in $ T\smallsetminus T_k$}

For each $1\leq h\leq 4k^2$ we now want to connect the lines of $\nu_k$, which end on $\delta(k,h)$, with the baricenter of $\Delta(k,h)$. We define
\begin{equation}\label{eq-notation-lemma-tetraedro}
R(k,j,h):=\Gamma_k^j \cap \delta(k,h),\qquad N(k,j,h):=\#R(k,j,h).
\end{equation}
 We observe that  $\ha^2(\delta(k,h))=(1-\frac1{k^2})^2\ha^2(\Delta(k,h))$ and hence
\begin{equation}
	\label{eq-smaltriangle-vs-largetriangle}
	\begin{split}
&	|\ha^2(\Delta(k,h))-\ha^2(\delta(k,h))|\leq C\frac{(\operatorname{diam}(T))^2}{k^4},\\
&	 N(k,j,h)\leq Ck^4\ha^2(\delta(k,h)) \leq C(\operatorname{diam}(T))^2k^2.
	\end{split}
\end{equation}

 In order to concentrate the mass in the baricenter $d(T,k,h)$ of each $\Delta(k,h)$ we  connect each point $p\in R(k,j,h)$ to $d(T,k,h)$ using a small straight segment $[p,d(T,k,h)]$. On each of these segments  we then define the measure 
 \begin{equation}\label{eq-lemma-step2-omega}
\omega(k,j,h,p):=\frac{1}{k^4}b^j\otimes t_p 
\ha^1\mres [p,d(T,k,h)],
 \end{equation}
where $t_p$ is the unit tangent vector in the direction $(d(T,k,h)-p)\operatorname{sign}(\langle t_j,n_h\rangle)$ and $n_h$ is the outward normal vector of $\delta(k,h)$. 

We then define  
\begin{equation}
	\label{eq-measure-sulle-facce}
	\omega_k:= \sum_{j=1}^M\sum_{ h=1}^{4k^2}\sum_{p\in R(k,j,h)}\omega(k,j,h,p).
\end{equation}

Since $\|\omega(k,j,h,p)\|(T)\leq C\operatorname{diam}(T)|b^j|k^{-5}$ from \eqref{eq-smaltriangle-vs-largetriangle} we infer
\begin{equation}\label{eq-vanishing of segments1}
\|\omega_k\|(\R^n)\leq 	C\frac{1}{k}(\operatorname{diam}(T))^3\sum_{j=1}^M |b^j|,
\end{equation}
and then
\begin{equation}\label{eq-vanishing of segments2}
	\lim_{k\rightarrow+\infty}\|\omega_k\|(\R^n)=0.
\end{equation}
\begin{figure}[t]
		\fontsize{8}{4}{
			\def\svgwidth{300pt}
\begingroup%
  \makeatletter%
  \providecommand\color[2][]{%
    \errmessage{(Inkscape) Color is used for the text in Inkscape, but the package 'color.sty' is not loaded}%
    \renewcommand\color[2][]{}%
  }%
  \providecommand\transparent[1]{%
    \errmessage{(Inkscape) Transparency is used (non-zero) for the text in Inkscape, but the package 'transparent.sty' is not loaded}%
    \renewcommand\transparent[1]{}%
  }%
  \providecommand\rotatebox[2]{#2}%
  \newcommand*\fsize{\dimexpr\f@size pt\relax}%
  \newcommand*\lineheight[1]{\fontsize{\fsize}{#1\fsize}\selectfont}%
  \ifx\svgwidth\undefined%
    \setlength{\unitlength}{706.46008589bp}%
    \ifx\svgscale\undefined%
      \relax%
    \else%
      \setlength{\unitlength}{\unitlength * \real{\svgscale}}%
    \fi%
  \else%
    \setlength{\unitlength}{\svgwidth}%
  \fi%
  \global\let\svgwidth\undefined%
  \global\let\svgscale\undefined%
  \makeatother%
  \begin{picture}(1,0.58042518)%
    \lineheight{1}%
    \setlength\tabcolsep{0pt}%
    \put(0,0){\includegraphics[width=\unitlength,page=1]{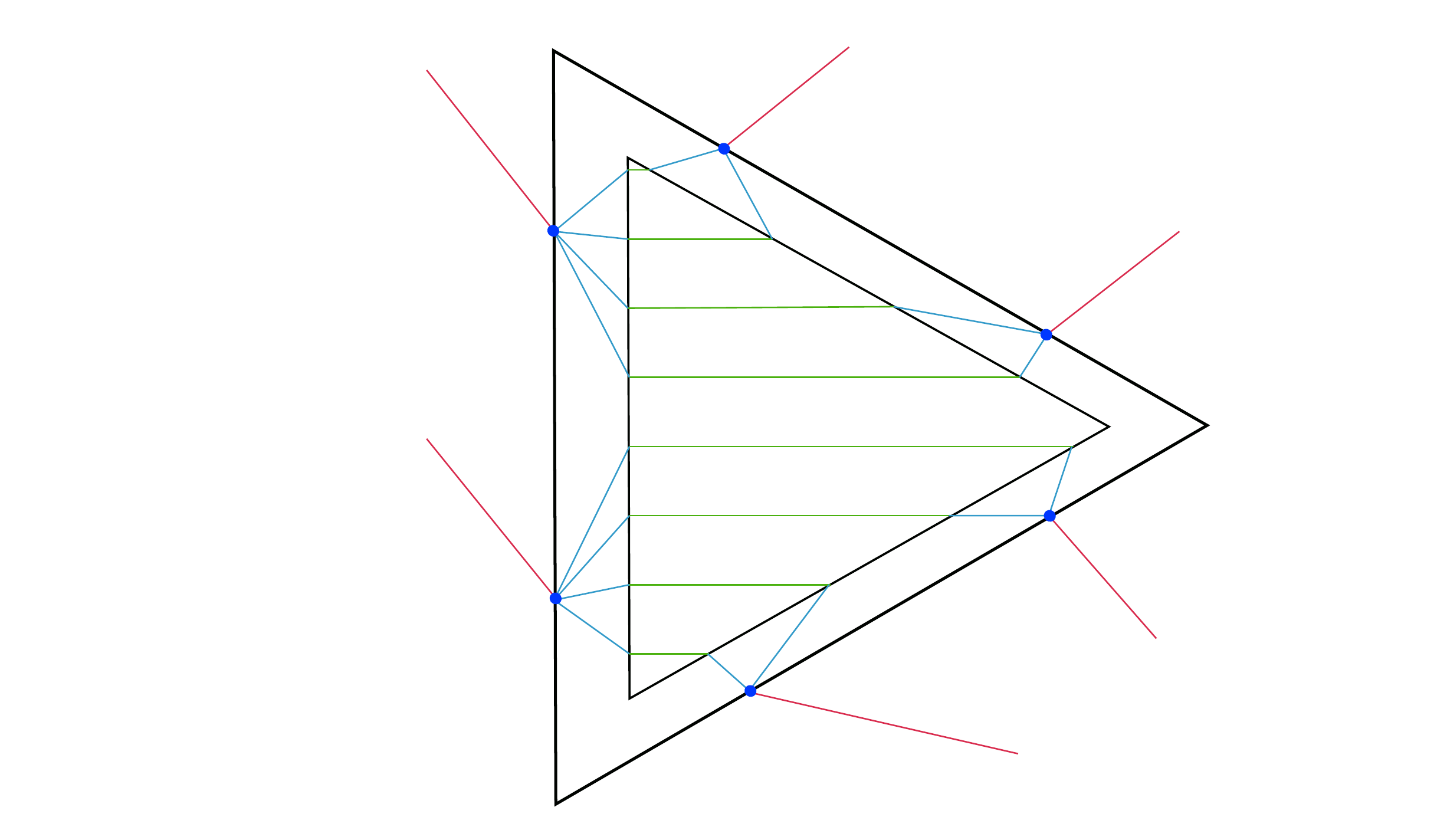}}%
    \put(0.49398862,0.28492236){\makebox(0,0)[lt]{\lineheight{0.40000001}\smash{\begin{tabular}[t]{l}$\nu_k$\end{tabular}}}}%
    \put(0.56787536,0.15507957){\makebox(0,0)[lt]{\lineheight{0.40000001}\smash{\begin{tabular}[t]{l}$\omega_k$\end{tabular}}}}%
    \put(0.7460547,0.20816745){\makebox(0,0)[lt]{\lineheight{0.40000001}\smash{\begin{tabular}[t]{l}$\rho_k$\end{tabular}}}}%
    \put(0.7672874,0.28778978){\makebox(0,0)[lt]{\lineheight{0.40000001}\smash{\begin{tabular}[t]{l}$T_k$\end{tabular}}}}%
    \put(0.41694914,0.54258125){\makebox(0,0)[lt]{\lineheight{0.40000001}\smash{\begin{tabular}[t]{l}$T$\end{tabular}}}}%
  \end{picture}%
\endgroup%

   \caption[]{A portion of $\mu_k$, for a single $j$ and in a two dimensional schematization. The blue dots are the baricenter of the $\Delta(k,h)'s$.}\label{fig esempio e scheletro misura}}
   \end{figure}

See Figure \ref{fig esempio e scheletro misura} for a representation of a portion of $\nu^j_k$ and $\omega(k,j,h,p)$.

\smallskip
\emph{Step 3. Definition outside $T$.}\\
We define the mass at the baricenter $d(T,k,h)$ to be the following vector valued quantity:
	\begin{equation}\label{definition exact mass}
	B(T,k,h):=\sum_{j=1}^{M} N(k,j,h)\operatorname{sign}(\langle t^j, n_h\rangle )\frac{b^j}{k^4}.
\end{equation}

This definition makes sense: indeed we observe that $\operatorname{supp}(\nu_k+\omega_k)$ is given by a finite family of piecewise straight lines connecting different baricenters, hence using Remark \ref{obs meaning of divergence free constraint} one can show that for every $\varphi\in [C_c^1(\BR^3)]^{N}$ it holds
\begin{equation}\label{Lemma tetra-identità-massa sui bordi}
    \langle \nu_k +\omega_k , \nabla \varphi \rangle = \sum_{h=1}^{4k^2}\langle B(T,k,h),\varphi(d(T, k,h))\rangle,
\end{equation}
which means precisely that the vector valued mass carried by $\nu_k+\omega_k$ at each baricenter on $\partial T$ is given by $B(T,k,h)$.

For every $\Delta(k,h)$ with $1\leq h \leq 4k^2$ we define the measures
 \begin{equation}
\rho_k(h):=(\ha^2(\Delta(k,h))A n_h -B(T,k,h))\otimes\tau(k,h)  \ha^1\mres \gamma(k,h),
 \end{equation}
where $\gamma(k,h)$ is an arbitrary half line with direction $\tau(k,h)$, not intersecting $\C F$ and having endpoint in $d(T,k,h)$ (see Remark \ref{Remark explanation modification mass} for the heuristics).\\
Again recalling Remark  \ref{obs meaning of divergence free constraint}, for every $\varphi \in [C_c^\infty(\BR^3)]^N$ it holds
\begin{equation}\label{eq-div-rho-k}
	\langle \rho_k (h), \nabla \varphi \rangle = \langle \ha^2(\Delta(k,h))A n_h -B(T,k,h),\varphi(d(T, k,h))\rangle.
\end{equation}

We then define the total average error as follows
\begin{equation}\label{eq-def-eq-rho-k}
\rho_k:=\sum_{h=1}^{4k^2}\rho_k(h).
\end{equation} 
We now want to show that,  for every compact set $K$,
$\|\rho_k\|(K)$ tends to zero. With that aim in mind we first prove the following claim.

\emph{Claim: there exists a universal constant $C$ such that
	\begin{equation}\label{inequality bound error masses}
		|B(T,k,h)-\ha^2(\delta(k,h))A n_h|\leq C\frac{1}{k^3}\operatorname{diam}(T)\sum_{j=1}^M|b_j|.
\end{equation}
}

From the definition of $B(T,k,h)$ and from the fact that $An_h=\sum_{j=1}^M b_j\langle t_j,n_h\rangle$ it is clear that we only need to consider $1\leq h\leq 4k^2$ and $1\leq j\leq M$ such that $\langle t^j,n_h\rangle  \not = 0$. Let then $j,h$  be as above and consider the elementary cell of the lattice $\C{G}_k^j$, i.e., $\C{C}_k^j:=\Pi_j \cap  \{\frac{s_1}{k^2}v_1^j+\frac{s_2}{k^2}v_2^j\, :\, 0\leq s_1,s_2\leq 1\}$. Let also $\Sigma_h$ be the plane that contains $\delta(k,h)$, then if we denote by  $P^{h,j}_k$ the elementary cell of the (planar) lattice $\Sigma_h \cap \Gamma^j_k$  we have that 
\begin{equation} \label{measure of the parallelogram}
	\ha^2(P^{h,j}_k)=\ha^2(\C{C}_k^j)\dfrac{1}{|\langle t^j, n_h\rangle |}=\frac{1}{k^4|\langle t^j, n_h\rangle |},
\end{equation}
since $\C{C}_k^j$ is obtained by orthogonal projection of a translation of $P^{h,j}_k$ on $\Pi_j$.\\
From the previous equality and the fact that  $A=\sum_{j=1}^M b_j\otimes t_j$ we obtain that
\begin{equation}\label{charcterisation approximated total mass}
	\ha^2(\delta(k,h))An_h=\sum_{j=1}^{M}\dfrac{\ha^2(\delta(k,h))}{\ha^2(P^{h,j}_k)}\operatorname{sign}(\langle t^j, n_h\rangle)\frac{b^j}{k^4} .
\end{equation}
Therefore in order to show \eqref{inequality bound error masses}, in view of the definition of $B(T, k,h)$, it is enough to show that
\begin{equation}\label{estimate difference btw number points and fraction of area}
	\left|\dfrac{\ha^2(\delta(k,h))}{\ha^2(P^{h,j}_k)}-N(k,h,j)\right|\leq C\operatorname{diam}(T)k.
\end{equation}
This inequality clearly holds true, since $\ha^2(\delta(k,h))/\ha^2(P^{h,j}_k)$ counts the number of points in $R(k,h,j)$ up to an error due to those points of the lattice  $\Sigma_h \cap \Gamma^j_k$ that  are close to the contour of $\delta(k,h)$. The number of such points can in turn be estimated by $C\operatorname{diam}(T)k$, which proves \eqref{estimate difference btw number points and fraction of area}.


Finally from \eqref{inequality bound error masses}, \eqref{eq-smaltriangle-vs-largetriangle} and  \eqref{eq-def-eq-rho-k} we obtain that for every compact set $K\subset\R^n$ we have
\begin{equation}\label{eq-rho-tende a zero}
\|\rho_k\|(K)\leq \sum_{h=1}^{ 4k^2} \|\rho_k(h)\|(K)\leq \frac{C}{k}\operatorname{diam}(T){\rm diam}(K)\sum_{j=1}^M|b_j|,
\end{equation}
which then tends to zero as $k\to +\infty$.

\smallskip

\emph{Step 4. Conclusions.}

We now combine all these constructions and define $\mu_k$ in the whole of $\R^n$, namely
\begin{equation}
	\label{eq-measure-mu-k-ovunque}
	\mu_k:=\nu_k+\omega_k + \rho_k.
\end{equation}
We claim that $\mu_k$ satisfies the thesis. Indeed  \ref{zero item lemma on simplex} follows directly from the definition of $\nu_k$ in Step 1, while \ref{ii of lemma simplex} from the definition of $\omega_k$ and $\rho_k$ in Step 2 and Step 3 respectively. Property \ref{iii of lemma simplex} follows from  \eqref{eq-vanishing of segments1} and \eqref{eq-rho-tende a zero}.
As for \ref{first item lemma on simplex} it is a consequence of \eqref{convergence against bounded functions}, \ref{ii of lemma simplex} and \ref{iii of lemma simplex}.

To see  \ref{third item lemma on simplex} we simply write $
    \langle \mu_k,\nabla \varphi\rangle = \langle \nu_k+\omega_k,\nabla \varphi\rangle + \langle \rho_k,\nabla \varphi\rangle$ and recall
  \eqref{Lemma tetra-identità-massa sui bordi} and  \eqref{eq-div-rho-k}. 

\end{proof}

\begin{remark}
   We observe that each of the approximating measures $\mu_k$ constructed in the previous lemma are such that
    \begin{equation}\label{remark ineq mass approssimanti}
       \|\mu_k\|(K)\leq C \left [\mathcal L^3(T)+\frac1k\operatorname{diam}(T){\rm diam  }(K)+\frac1k(\operatorname{diam}(T))^3\right ]\sum_{j=1}^M|b_j|,
   \end{equation}
    for every compact set $K\subset \BR^3$.
\end{remark}

\begin{remark}\label{remark piecewise constant on normal vector}
Let $A(x)=\sum_{i=1}^M A_i\chi_{T^i}(x)$ be a piecewise constant function  with null divergence, where $A_i\in \BR^{N\times 3}$ and the $T^i$'s are tetrahedra. If $T^i$ and $T^j$ share a face, then, by integrating over a small cube across the common face, it is easy to see that it must hold  
\begin{equation}\label{equation matrix acting on normal vector}
    A_i \nu=A_j \nu,
\end{equation}
where $\nu$ is the unit normal vector of the common face.
\end{remark}

\begin{remark}\label{Remark explanation modification mass}
  A few comments on the measures constructed in Lemma \ref{lemma recovery on a simplex 2 } are
    in order. Recall the definition of $\Gamma^j_k$ and $\delta(k,h)$ given in Lemma \ref{lemma recovery on a simplex 2 }.
    With a direct computation one can see that, on average, the number of lines in $\Gamma^j_k$ intersecting $\delta(k,h)$ is $\hat N(k,j,h) :=\ha^2(\delta(k,h))k^4|\langle t^j,n_h\rangle|$. Consequently, given that $A=\sum_j b^j\otimes t^j$, the \emph{averaged mass} on each baricenter is 
    $$\hat B(T,k,h)=\sum_{j=1}^M \hat N(k,j,h) \operatorname{sign}(\langle t^j, n_h\rangle )\frac{b^j}{k^4} = \ha^2(\delta(k,h)) A n_h.$$  
    Furthermore, since \eqref{equation matrix acting on normal vector} holds, it is clear how the averaged mass is a more convenient boundary datum than the exact mass $B(T,k,h)$, as defined in \eqref{definition exact mass}. Indeed, in Lemma \ref{lemma approximation from piecewise constant to straight segment }, the averaged mass will allow us to glue together the local construction of Lemma \ref{lemma recovery on a simplex 2 } performed in different tetrahedra preserving the divergence free constraint.
    
    In this sense, the $\rho_k$'s in \eqref{eq-measure-mu-k-ovunque} are to be considered just a small correction necessary to pass from $B$ to $\hat B$.
    
\end{remark}

We now glue together the local construction of Lemma \ref{lemma recovery on a simplex 2 }  to obtain the global approximating sequence.

\begin{lemma}\label{lemma approximation from piecewise constant to straight segment }
    Let $\Omega \subset \BR^3$ be an open set, $\C T=\{T^1,\cdots,T^M\}$ be an admissible triangulaiton of $\Omega$ and $A=\sum_{i=1}^M A_i\chi_{T^i}$ be a divergence free piecewise constant function, with respect to $\C T$, where $A_i\in \BR^{N\times 3}$. Assume that $A_i=\sum_{j=1}^{ M^i} b_j^i\otimes t_j^i $, for some $b_j^i\in \C Q$, $ t_j^i\in \B S^2$,  then there exists a sequence of polyhedral measures $\mu_k\in \C M_{df}^1(\Omega;  \C Q \times \B S^2)$  such that 
    \begin{enumerate}
    	\item\label{lemma-39-1} $\mu_k \wsconv \operatorname{A}\C L^3 \mres \Omega$,
    	\item\label{lemma-39-2} there exists a sequence of  measures $\eta_k\in \C M^1(\Omega;  \C Q \times \B S^2)$ such that $\|\eta_k\|(K)\to 0$, for every compact set $K$, and 
  \begin{equation}
\label{eq-muk}    
\mu_k=\frac{1}{k^4}\sum_{i=1}^M  \sum_{j=1}^{M^i} b_j^i\otimes t_j^i  \ha^1\mres (\Gamma^{j,i}_k\cap  T^i_k) +\eta_k,
  \end{equation}
where $T^i_k\subset\subset T^i$ with ${\rm dist}(T^i_k,\partial T^i)\leq {\rm diam}(T^i)/k^2$, and $\Gamma_k^{j,i}$ is a union of straight lines parallel to $t^i_j$. Furthermore $\ha^1(\operatorname{supp}(\mu_k)\cap \partial T^i)=0$ for every $1\leq i\leq M$.
    \end{enumerate}
    
\end{lemma}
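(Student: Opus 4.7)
The plan is to apply Lemma \ref{lemma recovery on a simplex 2 } on each tetrahedron $T^i$ individually and then glue the local approximations together into a globally divergence-free measure on $\Omega$. The key structural observation, already highlighted in Remark \ref{Remark explanation modification mass}, is that item \ref{third item lemma on simplex} of Lemma \ref{lemma recovery on a simplex 2 } endows each local $\mu_k^i$ with the ``averaged'' boundary mass $\mathcal{H}^2(\Delta^i(k,h))\,A_i n_h^i$ at each baricenter on $\partial T^i$, and these averaged masses are exactly what cancels across shared faces, thanks to Remark \ref{remark piecewise constant on normal vector}.

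First I fix the combinatorial data needed for the gluing. We may assume (as it happens in the triangulations produced by the proof of Theorem \ref{prop Piecewise constant approximation}) that adjacent tetrahedra of $\C T$ share a full $2$-face. On each such shared face I subdivide every edge into $k$ equal segments; the resulting decomposition into $k^2$ sub-triangles is then seen identically from both neighbouring tetrahedra, so the baricenters on the face coincide and $\mathcal{H}^2(\Delta^i(k,h))=\mathcal{H}^2(\Delta^j(k,h'))$ whenever $\Delta^i(k,h)=\Delta^j(k,h')$ is a shared sub-triangle. I then apply Lemma \ref{lemma recovery on a simplex 2 } on each $T^i$, with matrix $A_i$, decomposition $A_i=\sum_{j=1}^{M^i} b_j^i\otimes t_j^i$, and $\C F$ equal to the finite union of planes containing all faces of all the $T^i$'s; this produces polyhedral measures $\mu_k^i=\nu_k^i+\omega_k^i+\rho_k^i$.

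Setting
$$
\mu_k:=\Big(\sum_{i=1}^M \mu_k^i\Big)\mres\Omega,\qquad \eta_k:=\sum_{i=1}^M(\omega_k^i+\rho_k^i)\mres\Omega,
$$
item \ref{zero item lemma on simplex} of Lemma \ref{lemma recovery on a simplex 2 } directly yields the representation \eqref{eq-muk} with the $\Gamma_k^{j,i}$'s and $T_k^i$'s of its Step~1, while summing item \ref{iii of lemma simplex} over the finitely many $i$ gives $\|\eta_k\|(K)\to 0$ on every compact $K\subset\BR^3$. The weak$*$ convergence $\mu_k\wsconv A\,\C L^3\mres\Omega$ in \ref{lemma-39-1} follows by summing item \ref{first item lemma on simplex} over $i$ and using $A=\sum_i A_i\chi_{T^i}$. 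The condition $\mathcal{H}^1(\operatorname{supp}(\mu_k)\cap\partial T^i)=0$ is automatic since $\operatorname{supp}\mu_k$ is a finite union of straight segments that meet $\partial T^i$ only at isolated points (the baricenters, and, thanks to the choice of $\C F$, at transversal crossings).

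The heart of the proof is the verification that $\mu_k$ is divergence-free in $\Omega$. By item \ref{third item lemma on simplex} of Lemma \ref{lemma recovery on a simplex 2 }, for every $\varphi\in[C_c^\infty(\Omega)]^N$,
$$
\langle \mu_k,\nabla\varphi\rangle=\sum_{i=1}^M\sum_{h=1}^{4k^2}\mathcal{H}^2(\Delta^i(k,h))\,\langle A_i n_h^i,\varphi(d(T^i,k,h))\rangle.
$$
Since $\Omega\subset\subset\bigcup_i T^i$, every baricenter lying in $\operatorname{supp}\varphi\subset\Omega$ sits on an interior $2$-face of $\C T$ shared by exactly two tetrahedra $T^i,T^j$ with coincident subdivisions and outer normals $n_h^j=-n_h^i$; their combined contribution is $\mathcal{H}^2(\Delta)\,\langle(A_i-A_j)n_h^i,\varphi(d)\rangle$, which vanishes by Remark \ref{remark piecewise constant on normal vector}. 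All remaining baricenters lie outside $\Omega$ and are killed by $\varphi(d)=0$. The main obstacle is exactly this combinatorial compatibility of subdivisions on shared faces; once it is arranged, the cancellation of averaged boundary masses built into property \ref{third item lemma on simplex} makes the divergence-free conclusion almost automatic.
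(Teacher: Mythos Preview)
Your proof is correct and follows essentially the same route as the paper: apply Lemma \ref{lemma recovery on a simplex 2 } on each $T^i$ with $\C F$ taken to contain all face planes, set $\mu_k=\sum_i\mu_k^i$ and $\eta_k=\sum_i(\omega_k^i+\rho_k^i)$, and obtain the divergence-free property from the cancellation of the averaged boundary masses across shared faces via Remark \ref{remark piecewise constant on normal vector}. The only small point you leave implicit, which the paper states explicitly, is that the choice of $\C F$ (and, if needed, generic directions for the half-lines $\rho_k^i$) guarantees $\ha^1(\operatorname{supp}\mu_k^i\cap\operatorname{supp}\mu_k^l)=0$ for $i\neq l$, so that the sum $\sum_i\mu_k^i$ is again a measure in $\C M^1(\Omega;\C Q\times\B S^2)$ with well-defined multiplicity in $\C Q$.
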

\begin{proof}
     For every tetrahedron $T^i\in \C T$ we apply Lemma \ref{lemma recovery on a simplex 2 } on $A_i$ to find four sequences of measures $\mu_k^i,\nu_k^i, \omega^i_k,\rho^i_k \in \C M^1_{{\rm loc}}(\R^3;\C Q\times \B S^2)$. Define $\mu_k:=\sum_{i=1}^M\mu_k^i$ and $\eta_k:=\sum_{i=1}^M \omega_k^i+\rho^i_k$.  Set $\Omega':={\rm int}(\cup_{T^i\in \C T} T^i)$. 
     
     We observe that from \ref{ii of lemma simplex} of Lemma \ref{lemma recovery on a simplex 2 }  , by taking an appropriate family of plane $\C F$ (containing all the boundaries $\partial T^i$), without loss of generality we can assume that $\ha^1(\operatorname{supp}\mu_k^i\cap\operatorname{supp}\mu_k^l)=0$ for $i\not=l$, and $\ha^1(\operatorname{supp}(\mu_k)\cap \partial T^i)=0$ for every $1\leq i\leq M$. In particular $\mu_k\in \C M^1(\Omega;  \C Q \times \B S^2)$ since each $\mu_k^i$ does and they have disjoint support.

    We first show that, with this definition, $\mu_k$ is divergence free in $\Omega'$. Indeed from \ref{third item lemma on simplex} of Lemma \ref{lemma recovery on a simplex 2 } we get that for every $\varphi \in [C^\infty_c(\Omega')]^N$
    \begin{equation*}
    	\langle \mu_k,\nabla \varphi\rangle= \sum_{i=1}^M  \langle \mu_k^i,\nabla \varphi\rangle=\sum_{i=1}^M\sum_{h=1}^{4k^2} \langle \ha^2(\Delta_i(k,h))A_i n^i_h ,\varphi(d(T_i,k,h))\rangle, 
    \end{equation*}
    where $n_h^i$ is the outer unit normal with respect to $T_i$, and  $\Delta_i(k,h)$ is one of the $4k^2$ triangles that tile $\partial T_i$ (as defined in \eqref{def-division in subsimplexes}). Then using that for any pair $i,j\in\{1,\ldots, M\}$ and $h,h'\in\{1,\ldots, 4k^2\}$ we have that either $\ha^2(\Delta_i(k,h)\cap \Delta_j(k,h'))=0$ or $\Delta_i(k,h)=\Delta_j(k,h^\prime)$ we denote the set
    $$
    \C A_k=\{(i,j,h,h'): \ i<j\,,\ \ha^2(\Delta_i(k,h)\cap \Delta_j(k,h'))>0\},
    $$
    and we can rewrite
    \begin{equation}
    	\langle \mu_k,\nabla \varphi\rangle= \sum_{\C A_k }\langle \ha^2(\Delta_i(k,h))(A_i n^i_h +A_{j} n_{h'}^{j}),\varphi(d(T_i,k,h))\rangle=0,
    \end{equation}
    since $n_h^i=-n_{h^\prime}^{j}$ and \eqref{equation matrix acting on normal vector} holds.

       Furthermore, from \ref{first item lemma on simplex} of Lemma \ref{lemma recovery on a simplex 2 } and the fact that $\lim_k\|\mu_k^i\|(\operatorname{int}T^i)=\lim_k\|\mu_k^i\|(\Omega)$,  we have that $\mu_k\wsconv A \C L^n $ in $\Omega$.
       
Finally property \ref{lemma-39-2} is a direct consequence of choice of $\C F$, the definition of $\mu_k^i $ and $\rho_k^i$, and \ref{zero item lemma on simplex} of Lemma \ref{lemma recovery on a simplex 2 }. 
\end{proof}

\begin{proof}[Proof of Theorem \ref{theo approximation straight lines}]
    Let $\mu\in [\C M (\Omega)]^{N\times 3}$ be divergence free. From Theorem \ref{prop Piecewise constant approximation} we obtain a sequence $\C T_k=\{T_k^i\}_{i=1,\cdots, M(k)}$ of admissible triangulations of $\Omega$ and a sequence $A_k:\Omega\to \mathbb R^{N\times 3}$ of piecewise constant functions relatively to $\C T_k$, such that $A_k \C L^3 \mres \Omega$ approximates strictly $\mu$. For each $k$, let $A_k(\Omega)=\{A_k^i\}_{i=1,\cdots, M(k)},$ and write $A_k^i=\sum (A_k^i)_{lm}e_l\otimes e_m$.  From Lemma \ref{lemma approximation from piecewise constant to straight segment } applied to each $A_k$  we then get a sequence of measures $\mu_k^h\in \C  M_{df}^1(\Omega;  \C Q \times \B S^2)$ approximating $A_k \C L^3\mres \Omega$ and such that, recalling \eqref{remark ineq mass approssimanti}, $\|\mu_k^h\|(\Omega)\leq C\|A_k\|_{L^1}\leq  C\|\mu\|(\Omega) $, hence we can find a diagonal sequence $\mu_k^{h(k)}\wsconv \mu$.
\end{proof}
\section{The upper bound}\label{sec-upper}

The approximation results proved above provide a local construction which is the crucial ingredient for the proof of the upper bound, which is stated and proved below.

\begin{proposition}
	\label{pro-upper-bound}
	Let $\psi:\B{Z}\times \B S^2 \longrightarrow [0,+\infty)$ be $\ha^1$-elliptic and obey $\frac{1}{c}|b|\leq \psi(b,t)$ for all $b\in \B Z$ and $t\in \B S^2$. Let $\Omega \subset \B R^3$ be a bounded open set, simply connected with  Lipschitz boundary. Then for every $\mu\in [\C M(\Omega)]^{N\times 3}$ with null divergence there exists a sequence $\mu_\sigma\in \C M_{df}^1(\Omega;\sigma\Z^N\times\B S^2)$ converging weakly$*$ to $\mu$ such that
	$$
	\liminf_{\sigma\to 0} E_{\sigma}(\mu_\sigma)\leq E_0(\mu).
	$$
\end{proposition}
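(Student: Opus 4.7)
My plan is to combine a density argument with a mild modification of the constructive Lemmas \ref{lemma recovery on a simplex 2 } and \ref{lemma approximation from piecewise constant to straight segment }, and to evaluate the energy using the asymptotic expansion in Proposition \ref{prop-recession-properties}(iii).

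\emph{Reduction and optimal decomposition.} Since $g$ is continuous, $1$-homogeneous and has linear growth (Lemma \ref{lemma properties g}), Reshetnyak's continuity theorem makes $E_0$ continuous along strictly convergent sequences in $[\C M(\Omega)]^{N\times 3}$. Theorem \ref{prop Piecewise constant approximation} produces such a sequence $A_k\C L^3\wsconv\mu$ with $A_k$ piecewise constant and divergence free on an admissible triangulation, so that $E_0(A_k\C L^3)\to E_0(\mu)$. By a diagonal argument I will thus restrict to $\mu=A\C L^3$ with $A=\sum_i A_i\chi_{T^i}$ piecewise constant and divergence free on a fixed triangulation $\C T=\{T^i\}$. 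Given $\eta>0$, since $g=g_\infty^{**}$, Carathéodory's theorem together with the $1$-homogeneity of $g_\infty$ on its effective domain furnishes, on each $T^i$, a decomposition $A_i=\sum_j b^i_j\otimes t^i_j$ with $b^i_j=\lambda^i_j\hat b^i_j\in\C Q$ (where $\hat b^i_j\in\Z^N$ is primitive and $\lambda^i_j>0$), $t^i_j\in\B S^2$, and $\sum_j\psi_\infty(b^i_j,t^i_j)\le g(A_i)+\eta/\C L^3(\Omega)$.

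\emph{The $\sigma$-adapted construction.} I would now rerun the construction of Lemmas \ref{lemma recovery on a simplex 2 } and \ref{lemma approximation from piecewise constant to straight segment } with two independent parameters — one governing the integer multiplicity $n_\sigma\in\N$ on each line, the other the spacing of the parallel-line lattice — both chosen as functions of $\sigma$. Fix $n_\sigma\to\infty$ with $\sigma n_\sigma\to 0$ (e.g.\ $n_\sigma=\lfloor\sigma^{-1/2}\rfloor$); on each tetrahedron $T^i$ and each direction $t^i_j$ I would place segments of multiplicity $\sigma n_\sigma\hat b^i_j\in\sigma\Z^N$ on a lattice orthogonal to $t^i_j$ of spacing $(d^{i,j}_\sigma)^2=\sigma n_\sigma/\lambda^i_j\to 0$, so that the averaged Burgers-vector density equals $b^i_j$. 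The face matching at sub-face baricenters of Lemma \ref{lemma recovery on a simplex 2 } and the tetrahedron gluing of Lemma \ref{lemma approximation from piecewise constant to straight segment } go through unchanged because they only use the averaged mass $\ha^2(\delta)An$ (Remark \ref{Remark explanation modification mass}), which depends only on $A$; the local corrections $\omega_\sigma$ and $\rho_\sigma$ still vanish in total variation by estimates analogous to \eqref{eq-vanishing of segments1} and \eqref{eq-rho-tende a zero}. This produces $\mu_\sigma\in\C M_{df}^1(\Omega;\sigma\Z^N\times\B S^2)$ with $\mu_\sigma\wsconv A\C L^3$.

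\emph{Energy computation.} The total length of lines of type $(i,j)$ inside $T^i$ will be $\sim \C L^3(T^i)/(d^{i,j}_\sigma)^2=\C L^3(T^i)\lambda^i_j/(\sigma n_\sigma)$, and the energy per unit length on such a line is $\sigma\psi(n_\sigma\hat b^i_j,t^i_j)$, so
\[
E_\sigma^{i,j}\sim \C L^3(T^i)\,|b^i_j|\,\frac{\psi(n_\sigma\hat b^i_j,t^i_j)}{|n_\sigma\hat b^i_j|}\longrightarrow \C L^3(T^i)\,\psi_\infty(b^i_j,t^i_j)
\]
by Proposition \ref{prop-recession-properties}(iii), since $|n_\sigma\hat b^i_j|\to\infty$ with direction $\hat b^i_j=b^i_j/|b^i_j|$. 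The linear bound \eqref{eq-helliptic-prop} controls $E_\sigma(\omega_\sigma+\rho_\sigma)$ by a multiple of their total variation, which vanishes. Summing over $(i,j)$ yields $\limsup_\sigma E_\sigma(\mu_\sigma)\le E_0(\mu)+\eta$; a diagonal extraction over $\eta\to 0$ then produces the sought recovery sequence.

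The hard part will be rechecking that, with the new parameters, the error estimate \eqref{inequality bound error masses} of Lemma \ref{lemma recovery on a simplex 2 } still closes — i.e.\ that the correction $\rho_\sigma$ needed to restore exact divergence-freeness at each sub-face baricenter has total variation tending to zero. The balancing between $n_\sigma$, the line spacing, and the mesh size of the face subdivision must be done with some care, but it should not require any new ideas beyond those already present in Section \ref{sec-approx}.
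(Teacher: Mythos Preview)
Your one–step route is genuinely different from the paper's, and as written it has a gap at the point where the lattice constraint $\sigma\Z^N$ meets the divergence-free constraint. In your $\sigma$-adapted construction the bulk segments and the connectors carry multiplicities $\sigma n_\sigma\hat b^i_j\in\sigma\Z^N$, so the \emph{exact} mass delivered at each sub-face baricenter,
\[
B\;=\;\sum_j N_j\,\mathrm{sign}(\langle t^i_j,n_h\rangle)\,\sigma n_\sigma\hat b^i_j,
\]
does lie in $\sigma\Z^N$. The \emph{averaged} mass you want to impose for the inter-tetrahedral gluing, however, is $\ha^2(\Delta)A_i n_h$, a generic real vector. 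The correction half-line $\rho_\sigma(h)$ therefore carries multiplicity $\ha^2(\Delta)A_i n_h-B\notin\sigma\Z^N$, and the resulting $\mu_\sigma$ is \emph{not} in $\C M^1_{df}(\Omega;\sigma\Z^N\times\B S^2)$; by definition $E_\sigma(\mu_\sigma)=+\infty$. Dropping $\rho_\sigma$ is not an option either: then the baricenter masses on a shared face do not cancel and $\mu_\sigma$ fails to be divergence free. So the sentence ``This produces $\mu_\sigma\in\C M_{df}^1(\Omega;\sigma\Z^N\times\B S^2)$'' is not justified.

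The paper circumvents this by separating geometry from arithmetic. It first runs Lemmas~\ref{lemma recovery on a simplex 2 } and~\ref{lemma approximation from piecewise constant to straight segment } \emph{independently of} $\sigma$, obtaining a polyhedral divergence-free $\tilde\mu$ with constant multiplicities on finitely many segments and with the auxiliary energy $F_\infty(\tilde\mu)=\int_\Gamma\psi_\infty(\theta,t)\,d\ha^1$ close to $E_0(\mu)$. It then invokes the structure theorem of \cite{ConGarMas15} to decompose $\tilde\mu=\sum_i\theta_i\otimes t_i\,\ha^1\mres\Gamma_i$ into finitely many \emph{closed loops} with constant weight, and sets $\hat\mu_\sigma:=\sum_i\sigma\lfloor\theta_i/\sigma\rfloor\otimes t_i\,\ha^1\mres\Gamma_i$. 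Because each $\Gamma_i$ is closed, rounding its constant multiplicity preserves divergence-freeness exactly while forcing the weights into $\sigma\Z^N$; Proposition~\ref{prop-recession-properties}\ref{lemma convergence psi to psi infty} then gives $E_\sigma(\hat\mu_\sigma)\to F_\infty(\tilde\mu)$. This loop-decomposition-and-rounding step is the device that reconciles the lattice constraint with solenoidality, and it is missing from your proposal.

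Your direct approach can likely be rescued, but not for free: you would have to replace the target at each sub-face by a value in $\sigma\Z^N$ chosen \emph{consistently} across the shared face (note that naive componentwise floors do not satisfy $\lfloor x\rfloor+\lfloor -x\rfloor=0$, so one must fix the target per face rather than per tetrahedron), and then redo the error bound \eqref{inequality bound error masses} with this modified target. That is a new ingredient, not something that ``goes through unchanged''.
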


\begin{proof}

The strategy of the proof follows closely the one in \cite{ConGarMul17}. It consists of a first step in which, using the approximation results, one reduces to divergence free measures concentrated on polyhedral curves whose limiting energy resolves the convexification procedure in the definition of the $\Gamma$-limit. With this we reduce the analysis to the construction of a recovery sequence for the auxiliary functional $F_\infty$ defined as follows:
\begin{equation}\label{eq-auxiliary-F}
	F_\infty(\mu, \omega):=\begin{cases}
		\displaystyle{\int_{\Gamma\cap \omega }\psi_\infty(\theta,t)d\ha ^1}\ \ &\hbox{if }\mu=\theta\otimes t \ha^1\mres \Gamma \in \C M_{df}^1(\omega;  \mathcal Q\times\mathbb S^2), 
		\\
		&\mu \hbox{ polyhedral}; \\
		+\infty  &\hbox{otherwise},
	\end{cases}
\end{equation}
where $\omega$ is an open set, and $F_\infty(\mu):=F_\infty(\mu,\Omega)$. 

\medskip

\textit{Step 1: Reduction from $E_0$  to $F_\infty$.}

We now prove that $E_0$ is the relaxation of $F_\infty$ with respect to the weak$*$ topology, i.e., $E_0=\bar F_\infty$.  Note that from the definition of $E_0$ we have that $E_0\leq F_\infty$, and hence $E_0\leq \bar F_\infty$, therefore we just have to prove the upper bound.
From Theorem \ref{prop Piecewise constant approximation} and Reshetnyak Continuity Theorem  we obtain that divergence free piecewise constant measures are dense in energy for $E_0$, i.e., for every divergence free measure $\mu\in [\C M(\Omega)]^{N\times 3}$ there exists a sequence of divergence free piecewise constant measures $\nu_k$ such that $\nu_k\wsconv
\mu$ and 
\begin{equation}
    \lim_{k\rightarrow+\infty}E_0(\nu_k)=E_0(\mu).
\end{equation}
Thus,  since the weak$*$ convergence is metrizable on bounded set of $\mes$, without loss of generality we can now assume $\mu$ to be a divergence free piecewise constant measure of the form
\begin{equation}\label{upper bound structtura piecewise constant}
    \mu=\sum_{i=1}^M \chi_{T^i}A_i \C L^3,
\end{equation}
where $\mathcal T=\{T^1,\cdots,T^M\}$ is an admissible triangulation of $\Omega$.
We thus construct the recovery sequence in the relaxation of $F_\infty$ for a measure $\mu$ as in \eqref{upper bound structtura piecewise constant}.

First recall that $g$ is the convex envelope of $g_\infty$. Moreover we know (see Lemma \ref{lemma properties g}) that $g$ is finite and that  $\psi_\infty(b,t)=+\infty$ if and only if $b\in \BR^N\smallsetminus \C{Q}$. Therefore  for any  fixed $\eps>0$ and for every matrix $A_i$, with $i\in\{1,\ldots, M\}$, we find  $M^i_\eps\leq 3N+1$ rank one matrices of the form $\tilde b^{j,i}_\eps\otimes t^{j,i}_\eps $, with  $\tilde b^{j,i}_\eps\in \C Q$, $t^{j,i}_\eps\in \mathbb S^2$, and coefficients $\lambda^{j,i}_\eps>0$ such that $\sum_{j=1}^{M^i_\eps}\lambda^{j,i}_\eps=1$, $\sum_{j=1}^{M^i_\eps}\lambda^{j,i}_\eps \tilde b^{j,i}_\eps\otimes t^j_\eps=A_i$, and
\begin{equation}\label{estimate for g in the approximation in the integral}
	g(A_i)+\eps\geq \sum_{j=1}^{M^i_\eps}\lambda^{j,i}_\eps\psi_\infty (\tilde b^{j,i}_\eps,t^{j,i}_\eps)= \sum_{j=1}^{M^i_\eps}\psi_\infty (\lambda^{j,i}_\eps\tilde b^{j,i}_\eps,t^{j,i}_\eps),
\end{equation}
Setting $b^j_\varepsilon:=\lambda^j_\varepsilon\Tilde{b_\varepsilon}^j \in \C Q$ we then have 
\begin{equation}\label{estimate for g in the approximation in the integral due}
	 (g(A_i)+\eps)\C{L}^3(T^i\cap \Omega)\geq \sum_{j=1}^{M^i_\eps}\psi_\infty (b^{j,i}_\eps,t^{j,i}_\eps)\C{L}^3(T^i\cap \Omega).
\end{equation}

We now apply Lemma \ref{lemma approximation from piecewise constant to straight segment } 
to $\mu=\sum_{i=1}^M \chi_{T^i}A_i$, with $A_i=\sum_{j=1}^{M_\eps^i}b^{j,i}_\eps\otimes t^{j,i}_\eps$ satisfying \eqref{estimate for g in the approximation in the integral}, to find a sequence of  measures $\mu_k^\eps\in \C M_{df}^1(\Omega ; \C Q\times \B S^2)$ converging to $\mu$, and $\eta_k^\eps$ vanishing as  $k\to +\infty$.

On the other hand, from \ref{ii of lemma simplex} of Lemma \ref{lemma recovery on a simplex 2 } and using that $\psi_\infty(\theta,t)\leq C|\theta|$ for all $\theta \in \C Q$ it is easy to see that $F_\infty(\mu_k^\eps,\operatorname{int}(T^i)\cap \Omega)< +\infty$ and
\begin{equation}
    F_\infty(\mu_k^\eps,\operatorname{int}(T^i)\cap \Omega)\leq\sum_{j=1}^{M^i_\eps}\psi_\infty(b^{j,i}_\eps,t^{j,i}_\eps)\frac{1}{k^4}\ha^1(T_k^i\cap \Omega \cap \Gamma^{j,i,\eps}_{k}) + C\|\eta_k^\eps\|(T^i\cap \Omega).
\end{equation}
 Hence from \eqref{lemma-item-1-weak-convergence} of Lemma \ref{lemma recovery on a simplex 2 } we get
\begin{equation}
\limsup_{k\rightarrow +\infty}F_\infty(\mu_k^\eps,\operatorname{int}(T^i)\cap \Omega)\leq \sum_{j=1}^{M^i_\eps}\psi_\infty (b^{j,i}_\eps,t^{j,i}_\eps)\C{L}^3(T^i\cap \Omega).
\end{equation}
Recalling \eqref{estimate for g in the approximation in the integral due} we then get
\begin{align*}
E_0(\mu)+\eps\mathcal L^3(\Omega)&=\sum_{i=1}^M(g(A_i)+\eps)\C{L}^3(T^i\cap \Omega)\geq \sum_{i=1}^M\limsup_{k\rightarrow\infty}F_\infty(\mu_k^\eps,\operatorname{int}(T^i)\cap \Omega)\\
     &\geq \limsup_{k\rightarrow\infty}\sum_{i=1}^MF_\infty(\mu_k^\eps,\operatorname{int}(T^i)\cap \Omega) =  \limsup_{k\rightarrow\infty}F_\infty(\mu^\eps_k,\Omega),
\end{align*}
where in the last line we used the fact that $\ha^1(\operatorname{supp}(\mu_k)\cap \partial T^i)=0 $ for all $i$.

Now
 \ref{lemma convergence psi to psi infty} of Proposition \ref{prop-recession-properties} implies  $\psi_\infty(b,t)\geq c |b|$ for $b\in \C Q$, hence, from estimate \eqref{estimate for g in the approximation in the integral} we deduce the existence of a universal constant $C>0$ such that 
\begin{equation}\label{inequality boundedness b_j}
    \sum_{j=1}^{N_\eps}|b_\eps^{j,i}| \leq  C(|A_i|+\eps).
\end{equation}
In particular, given \eqref{remark ineq mass approssimanti}, we have that the family of measures $\mu^\eps_{k}$ is uniformly bounded.
Therefore, since the weak$*$ topology is metrizable on bounded set, via a diagonal argument we infer that for every divergence free measure  $\mu\in [\C M(\Omega)]^{N\times 3}$ there exists a sequence $\tilde \mu_h$ weakly$*$ converging to $\mu$, such that 
\begin{equation}\label{eq-upper-1}
  \limsup_{h\to +\infty} F_\infty(\tilde\mu_h)\leq E_0(\mu).
\end{equation}
In particular this sequence satisfies $F_\infty(\tilde \mu_h)<+\infty$.
\medskip

\textit{Step 2: Recovery sequence for $F_\infty$.}

We  now prove that for a given $\tilde\mu= \theta\otimes t \ha^1\mres \Gamma\in \C M_{df}^1(\Omega; \R^N\times\mathbb S^2)$, with $\Gamma$ polyhedral  we can construct  a sequence $\hat\mu_\sigma \in \C M_{df}^1(\Omega; \sigma\Z^N\times\mathbb S^2)$ converging to $\tilde \mu$ and such that 
\begin{equation}
	\label{eq-intermedio-upper}
	\limsup_{\sigma \rightarrow 0} E_\sigma(\hat\mu_\sigma)= F_\infty(\tilde\mu).
\end{equation}
Without loss of generality we can assume $F_\infty(\tilde \mu)<+\infty$. First we observe that, since $\Gamma$ is composed by a finite number of straight segments and $\tilde\mu$ is divergence free, $\theta$ must be constant on each segment. In particular $\theta\in \C Q $ attains a finite number of values. Therefore we can apply Theorem $2.5$ of \cite{ConGarMas15} to $\mu$, deducing that there exists a finite number of polyhedral closed loops $\Gamma_i$ with constant Burgers vector $\theta_i$ such that 
$$
\tilde\mu=\sum_i \theta_i\otimes t_i\ha^1\mres \Gamma_i.
$$
Here the $\theta_i$ do not necessary belong to $\sigma\Z^N$. Therefore we define the following  approximation of $\tilde\mu$ with measures 
\begin{equation}
	\hat\mu_\sigma:=\sum_i\sigma \left\lfloor \dfrac{\theta_i}{\sigma}\right\rfloor\otimes t_i  \ha^1\mres \Gamma_i \in \C M_{df}^1(\Omega;\sigma \Z^N\times \B S^2),
\end{equation}
where we denote $\left\lfloor \frac{b}{\sigma}\right\rfloor =(\big\lfloor \frac{b_1}{\sigma}\big\rfloor,\cdots,\big\lfloor \frac{b_N}{\sigma}\big\rfloor)$. 
These measures have the same support of $\mu$, and satisfy $E_\sigma(\hat\mu_\sigma)< +\infty$. We observe that $\hat\mu_\sigma$ is a finite sum of closed loops with constant multiplicity, therefore, again by Theorem $2.5$ in \cite{ConGarMas15}, it is divergence free. 

Since $\sigma \left\lfloor \frac{\theta_i}{\sigma}\right\rfloor $ converges to $ \theta_i$, we have that $\hat\mu_\sigma \wsconv \tilde\mu$.
Let now consider an arbitrary sequence $\sigma_j$ converging to $0$, then

\begin{equation}
	E_{\sigma_j}(\hat\mu_{\sigma_j})=\int_{\Gamma} \sigma_j\psi  ( z_j ,t ) d \ha^1,
\end{equation}
 where $z_j(x):= \sum_i \left\lfloor \frac{\theta_i}{\sigma_j}\right\rfloor \chi_{\Gamma_i}(x) \in \Z^N $.
 Since, clearly, for $\ha^1\mres \Gamma $ a.e. $x$ it holds that $\lim_{j\to+\infty} |z_j|=+\infty$ and $\lim_{j\to+\infty}  z_j/|z_j|=\theta/|\theta|$, thanks to  \ref{lemma convergence psi to psi infty} of Lemma \ref{prop-recession-properties} we deduce  
 $$
 \lim_{j\to+\infty}  \frac{\psi\left (z_j ,t\right )}{|z_j|}=\frac{\psi_\infty\left (\theta ,t\right )}{|\theta|}.
 $$ 
 On the other hand clearly it holds that $\lim_{j\to+\infty} \sigma_j|z_j|=|\theta|$, hence by rewriting 
\begin{equation} \label{eq-rewrite-psi}
	\sigma_j\psi\left (z_j,t\right )=\sigma_j|z_j|\frac{\psi\left (z_j ,t\right )}{|z_j|},
\end{equation}
we deduce that 
\begin{equation}
	\lim_{j\rightarrow +\infty} \sigma_j\psi( z_j,t)= \psi_\infty\left (\theta,t\right ),\ \ \ \ \ha^1\textit{-a.e. } x\in \Gamma.
\end{equation}

Finally, since  $|\sigma_j\psi( z_j ,t_i )|\leq c|\sigma_j z_j|\leq C (|\theta|+1)$, we conclude via dominated convergence theorem and get
\begin{equation}
	\lim_{j\to +\infty} E_{\sigma_j}(\hat\mu_{\sigma_j})=\int_\Gamma\psi_\infty(\theta,t)d \ha^1=F_\infty(\tilde\mu).
\end{equation}
\medskip

\textit{Step 3: Conclusion}

We conclude the proof by combining Step 1 and Step 2. Without loss of generality we can assume that the sequence $\tilde\mu_h$ constructed in Step 1 satisfies
\begin{equation}\label{eq-conclusion1}
\limsup_{h\to +\infty} F_\infty(\tilde\mu_h)=\lim_{h\to +\infty} F_\infty(\tilde\mu_h)\leq E_0(\mu)<+\infty.
\end{equation}
Then for every $h$ we obtain via Step 2 a sequence $\hat\mu_{\sigma}^h$ weakly$*$ converging to $\tilde\mu_h$ such that
\begin{equation}\label{eq-conclusion2}
\lim_{\sigma\to 0} E_0(\hat\mu_\sigma^h)= F_\infty(\tilde\mu_h).	
\end{equation}
A further diagonal argument provides the wanted recovery sequence and concludes the proof.
\end{proof}

\section*{Fundings}
The present paper benefits from the support of the PRIN 2017, Variational methods for stationary and evolution problems with singularities and interfaces,  2017BTM7SN-004. Also MF acknowledges the hospitality given by the HIM,  Hausdorff Research Institute for Mathematics.\\
\ \ \\
Declarations of interest: none.
\bibliographystyle{plain}
\bibliography{ForGar23}
\end{document}